\newcommand{\TPSSS}{\mathbb S^{\hspace{.2mm}1} \mbox{$\times
\hspace{-2.8mm}_{-}$} \, \mathbb S^{\hspace{.1mm}3}}
\newtheorem{theorem} {{\textsf{Theorem}}}
\newtheorem{proposition}[theorem]{{\textsf{Proposition}}}
\newtheorem{corollary}[theorem]{{\textsf{Corollary}}}
\newtheorem{definition}{{\textsf{Definition}}}
\newtheorem{remark}{{\textsf{Remark}}}
\newcommand{\e}{\varepsilon}
\begin{document}
\title{Lower bounds for regular genus and gem-complexity of PL 4-manifolds}
\author{Biplab Basak$^1$ and Maria Rita Casali$^2$}

\date{}

\maketitle

\vspace{-10mm}
\begin{center}
\noindent {\small $^1$Department of Mathematics, Indian Institute of Science, Bangalore 560\,012, India. biplab10@math.iisc.ernet.in (Current address: Theoretical Statistics and Mathematics Unit, Indian Statistical Institute, Bangalore 560\,059, India.)}

\noindent {\small $^2$Department of Phisics, Mathematics and Computer Science, University of Modena and Reggio Emilia, Via Campi 213 B, I-41125 Modena, Italy. casali@unimore.it}

\medskip

\date{June 27, 2016}
\end{center}

\hrule

\begin{abstract}
Within crystallization theory, two interesting PL invariants for  $d$-manifolds have been introduced and  studied, namely {\it gem-complexity} and  {\it regular genus}. In the present paper we prove that, for any closed connected PL $4$-manifold $M$, its gem-complexity $\mathit{k}(M)$ and its regular genus $ \mathcal G(M)$ satisfy: $$\mathit{k}(M) \ \ge \ 3 \chi (M) + 10m -6 \ \ \  \text{and} \ \ \  \mathcal G(M) \ \ge \ 2 \chi (M) + 5m -4,$$ where $rk(\pi_1(M))=m.$
These lower bounds enable to strictly improve previously known estimations for regular genus and gem-complexity of product 4-manifolds.
Moreover, the class of {\it semi-simple crystallizations} is introduced, so that the represented PL 4-manifolds attain the above lower bounds. The additivity of both gem-complexity and regular genus with respect to connected sum is also proved for such a class of PL 4-manifolds, which comprehends all ones of ``standard type'', involved in existing crystallization catalogues, and their connected sums.

\end{abstract}

\noindent {\small {\em MSC 2010\,:} Primary 57Q15. Secondary 57Q05, 57N13, 05C15.

\noindent {\em Keywords:} PL-manifold, pseudo-triangulation, crystallization, regular genus, gem-complexity, semi-simple crystallization.

}

\medskip

\hrule
\section{Introduction}
\section{Introduction}
A {\em simplicial cell complex} $K$ of dimension $d$ is a poset isomorphic to the face poset ${\mathcal X}$ of a $d$-dimensional simplicial CW-complex $X$. The topological space $X$ is called the {\em geometric carrier} of $K$ and is also denoted by $|K|$. If a topological space $M$ is homeomorphic to $|K|$, then $K$ is said to be a {\em pseudo-triangulation} of $M$.

If a pseudo-triangulation $K$ of a $d$-manifold $M$ ($d \geq 1$) contains exactly $d+1$ vertices, then $K$ is said to be {\em contracted}; its dual graph  gives rise to a {\em crystallization} of $M$, i.e. a $(d+1)$-colored contracted graph $\Gamma = (V, E)$ with an edge coloring $\gamma : E \to \{0, \dots, d\},$ so that the vertices of $K$ have one to one correspondence with the colors $0, \dots, d$ and the facets of $K$ have one to one correspondence with the vertices in $V$ (for details see \cite{fgg86}, or the following
Subsection \ref{crystal}).

The existence of crystallizations for every closed connected PL-manifold is ensured by a classical theorem due to Pezzana (see \cite{pe74}, or \cite{fgg86} for subsequent generalizations). Hence, every closed connected  PL $d$-manifold $M$ admits a contracted pseudo-triangulation, which is also called a {\it colored triangulation} of $M$, because of the edge-coloring of the associated crystallization.

\medskip

Within crystallization theory, two interesting PL invariants for PL $d$-manifolds have been introduced, namely {\it regular genus} and {\it gem-complexity}.

\smallskip

First, let us recall that, if $(\Gamma, \gamma)$ is a $(d+1)$-colored graph,
an embedding $i : \Gamma \hookrightarrow F$ of $\Gamma$ into a closed surface $F$ is called {\it regular} if there exists a cyclic permutation $\varepsilon=(\varepsilon_0, \varepsilon_1, \dots, \varepsilon_d)$ of the color set $\Delta_d=\{0, \dots, d\},$ such that the boundary of each face of $i(\Gamma)$ is a bi-colored cycle with colors $\varepsilon_j, \varepsilon_{j+1}$ for some $j$ (where the addition is performed modulo $d+1$).

Then, we have the following:

\begin{definition}
{\rm
The {\it regular genus} $\rho (\Gamma)$ of $(\Gamma,\gamma)$ is the least genus (resp. half of genus) of an
orientable (resp. non-orientable) surface into which $\Gamma$ embeds regularly; the {\it regular genus} $\mathcal G (M)$ of a closed connected PL $d$-manifold $M$ is defined as the minimum regular genus of its crystallizations.}
\end{definition}

Note that the notion of regular genus extends classical notions to arbitrary dimension: in fact, the regular genus of a closed connected orientable (resp. non-orientable) surface coincides with its genus (resp. half of its genus), while the regular genus of a closed connected 3-manifold coincides with its Heegaard genus (see  \cite{ga81,[GG]}).  The invariant regular genus has been intensively studied, yielding some important general results: for example, regular genus zero characterizes the $d$-sphere among all closed connected PL $d$-manifolds (\cite{[FG$_2$]}). In particular, in dimension $d \in \{4,5\}$, a lot of classifying results in PL-category have been obtained, both for closed and bounded PL $d$-manifolds (via suitable extensions of the involved notions): they concern the case of ``low'' regular genus, the case of ``restricted gap'' between the regular genus of the manifold and the regular genus of its boundary, and the case of ``restricted gap'' between the regular genus and the rank of the fundamental group of the manifold (see, for example, \cite{[CG], [C], {Casali}, [CM]}).

\smallskip
The second definition is quite natural, and is directly related to the combinatorial ``complicatedness'' of the representing tool via crystallization theory:

\begin{definition}\label{defn:gem-complexity}
{\rm
Given a PL $d$-manifold $M$, its {\em gem-complexity} is the non-negative
integer $\mathit{k}(M)=p-1$, where $2p$ is the minimum order of a crystallization of $M$.}
\end{definition}

It is easy to check that, for any dimension $d \ge 2$, gem-complexity zero characterizes the $d$-sphere among all closed connected PL $d$-manifolds.
Moreover, gem-complexity is the natural invariant used to create automatic catalogues of PL-manifolds via crystallizations.
This approach has been successfully followed in dimension three and four, where the choice and implementation of suitable sets of combinatorial moves preserving the represented manifold allowed the development of an effective  ``classifying algorithm'' up to PL-homeomorphism: all $3$-manifolds up to gem-complexity $14$ have been identified (see \cite{[L], [CC$_1$], [CC$_2$]} for the orientable case and \cite{[C$_2$], [C$_4$], [BCrG$_1$]} for the non-orientable one), together with all PL $4$-manifolds up to gem-complexity $8$ (see \cite{Casali14Cataloguing}).

Obviously, any crystallization of a given $d$-manifold $M$ yields an upper bound both for the regular genus and for the gem-complexity of $M$; on the contrary, the problem of finding lower bounds is generally more difficult. In  \cite{bd14}, a lower bound for gem-complexity of a closed connected PL 3-manifold is obtained, by means of the {\it weight} of the fundamental group of $M$, while a lower bound for simply-connected PL 4-manifold, involving the second Betti of $M$, easily follows from \cite[Proposition 2]{[Cav]}\footnote{Actually, \cite[Proposition 2]{[Cav]} yields also a lower bound for  closed connected orientable PL 4-manifold, but in the not simply-connected case it is not so significant.}.

In the present paper, we present lower bounds both for gem-complexity and for regular genus, for the whole class of closed connected PL
4-manifolds:\footnote{From now on, for sake of simplicity, we will simply write ``PL manifold'' instead of ``closed connected PL manifold''.}

\begin{theorem} \label{theorem 0}
Let $M$ be a (closed connected) PL $4$-manifold with $rk(\pi_1(M))=m.$
Then,
$$ \mathit{k}(M) \ \ge \ 3 \chi (M) + 10m -6,$$
$$ \mathcal G(M) \ \ge \ 2 \chi (M) + 5m -4.$$
\end{theorem}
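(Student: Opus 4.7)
My plan is to reduce both inequalities to a single combinatorial lower bound on the number of connected components of the 3-colored subgraphs of a crystallization, and then extract the two statements via the Euler characteristic and the 2-sphere identity for edge links. For a crystallization $(\Gamma,\gamma)$ of $M$ with $2p$ vertices and color set $\{0,1,2,3,4\}$, let $g_A$ denote the number of connected components of the subgraph of $\Gamma$ induced by the colors in $A\subseteq\{0,1,2,3,4\}$, and set $G_r=\sum_{|A|=r}g_A$. Since each edge link in $K(\Gamma)$ is a 2-sphere, every connected component of $\Gamma_{\{i,j,k\}}$ is a 3-colored graph representing $S^2$; applying $\chi(S^2)=2$ component by component yields the sphere identity $g_{ij}+g_{ik}+g_{jk}=p+2\,g_{ijk}$ for each triple $\{i,j,k\}$. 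Summing these ten identities and combining with $\chi(M)=5-G_3+G_2-3p$ produces the closed forms $G_2=10+4p-2\chi(M)$ and $G_3=15+p-3\chi(M)$.

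The heart of the argument is the lemma $g_{ijk}\ge m+1$ for every triple of colors, and this is where I expect the main obstacle to lie. My plan to prove it is to adapt Gagliardi's classical construction of a presentation of $\pi_1(M)$ from a crystallization so that the generators are indexed (up to a spanning-tree choice) by the $g_{ijk}$ components of $\Gamma_{\{i,j,k\}}$, with the relations supplied by appropriate cycles built from the remaining two colors. Such a presentation would have $g_{ijk}-1$ generators and thus force $m\le g_{ijk}-1$. The hard part is verifying this presentation rigorously: the classical Gagliardi presentation uses bicolored cycles for generators rather than tricolored components, so the adaptation requires a careful tracking of the CW structure of $K(\Gamma)$ and of how the two ``omitted'' colors act on it, followed by a standard Tietze-type reduction.

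Granted the lemma, the gem-complexity bound is immediate: summing $g_{ijk}\ge m+1$ over the $\binom{5}{3}=10$ triples gives $G_3\ge 10(m+1)$, and comparing with $G_3=15+p-3\chi(M)$ yields $p\ge 3\chi(M)+10m-5$, i.e.\ $\mathit{k}(M)=p-1\ge 3\chi(M)+10m-6$.

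For the regular genus bound, fix any cyclic permutation $\varepsilon$ of the colors. The standard face-count of the regular embedding gives $\rho_\varepsilon=1+(3p-S_\varepsilon)/2$ with $S_\varepsilon=\sum_{i=0}^{4}g_{\varepsilon_i\varepsilon_{i+1}}$. Summing the sphere identity over the five ``skip'' triples of $\varepsilon$ (those containing exactly one $\varepsilon$-adjacent pair) yields $S_\varepsilon=2G_2-5p-2\sum_{\mathrm{skip}}g_{ijk}$, which, after substituting the formula for $G_2$, collapses to the identity
\[
\rho_\varepsilon \;=\; 2\chi(M)-9+\sum_{\mathrm{skip}}g_{ijk}.
\]
Applying the lemma to each of the five skip triples then gives $\rho_\varepsilon\ge 2\chi(M)+5m-4$, and taking the minimum over $\varepsilon$ and over crystallizations yields $\mathcal G(M)\ge 2\chi(M)+5m-4$.
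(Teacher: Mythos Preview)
Your argument is correct, and for the regular genus bound it is cleaner than the paper's. A few remarks.

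\textbf{On the lemma $g_{ijk}\ge m+1$.} You flag this as the hard step, but no adaptation is needed: the classical presentation result (stated as Proposition~\ref{preliminaries}(d) in the paper) already says that for any two colors $i,j$, the connected components of $\Gamma_{\Delta_d\setminus\{i,j\}}$, minus one, biject onto a generating set for $\pi_1(M)$. In dimension $4$ the complement of a pair of colors is a triple, so the generators are indexed by tricolored components exactly as you want. Your worry that Gagliardi's generators are ``bicolored cycles'' is an artefact of the $3$-dimensional case, where $\Delta_3\setminus\{i,j\}$ has only two colors. So this step is a one-line citation, not an obstacle.

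\textbf{Gem-complexity.} Your derivation via the sphere identity and the Euler-characteristic formula $\chi=5-G_3+G_2-3p$ is equivalent to the paper's use of the Dehn--Sommerville equations; both routes produce $G_3=15+p-3\chi(M)$ and then apply $G_3\ge 10(m+1)$.

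\textbf{Regular genus.} Here you take a genuinely different route. The paper writes the ten sphere identities as a $10\times 10$ linear system $AX=B$, inverts $A$ explicitly, expresses each $g_{\varepsilon_i\varepsilon_{i+1}}$ in terms of the $g_{ijk}$'s, and then bounds $\chi_\varepsilon(\Gamma)$ by inspecting the sign pattern of the columns of $A^{-1}$ (each column has four entries $1/3$ and six entries $-1/6$, and at most three of the $1/3$'s can land on $\varepsilon$-adjacent rows). Your approach bypasses all of this: summing the sphere identity only over the five ``skip'' triples (those whose complement is an $\varepsilon$-nonadjacent pair) yields the exact identity
\[
\rho_\varepsilon \;=\; 2\chi(M)-9+\sum_{\text{skip triples }T}g_T,
\]
from which the bound is immediate. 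This is shorter and makes transparent why exactly five of the ten $g_{ijk}$'s control $\rho_\varepsilon$; the paper's matrix inversion hides this combinatorics. On the other hand, the paper's explicit inverse makes it easy to read off the equality case ($q=0$, all $t_{ijk}=0$), which feeds directly into their analysis of semi-simple crystallizations; your identity gives the same equality criterion (each skip-triple $g_T=m+1$), but one has to check separately that this forces all ten $g_{ijk}=m+1$ and hence $p=\bar p$, which it does via the $G_3$ formula.
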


\bigskip

The concept of simple crystallizations was introduced in \cite{bs14}. By definition, PL 4-manifolds represented by simple crystallizations are simply-connected; moreover, the characterization of the class of $4$-manifolds admitting simple crystallizations (see \cite{Casali14GemCompl}) easily proves that all elements of that class attain both bounds of Theorem \ref{theorem 0} (see Remark \ref{rem:simple}).
Hence, in order to investigate the possible sharpness of the above bounds, also in the not simply-connected case, the present paper introduces the concept of {\it semi-simple crystallizations}, which comprehend and generalize simple crystallizations.

\begin{definition}
{\rm A crystallization $(\Gamma,\gamma)$ of a PL $4$-manifold $M$ is called a {\em semi-simple crystallization of type m} if the 1-skeleton of the associated colored triangulation contains exactly $m +1$ 1-simplices for each pair of 0-simplices, where $m$ is the rank of the fundamental group of $M$.
Semi-simple crystallizations of type $0$ are called {\em simple crystallizations}, according to \cite{bs14,Casali14GemCompl}.}
\end{definition}

Note that all PL 4-manifolds involved in the existing crystallization catalogues turn out to admit a semi-simple crystallization (Proposition \ref{prop:gem}); moreover, the class of PL 4-manifolds admitting semi-simple crystallizations is proved to be closed under connected sum (Proposition \ref{prop:connected_sum}). Hence, PL $4$-manifolds admitting semi-simple crystallizations actually constitutes a huge class  (Remark \ref{rem:huge-class}), which comprehends all PL 4-manifolds ``of standard type'' (i.e.  $\mathbb S^4$, $\mathbb{CP}^{2}$, $\mathbb{S}^{2} \times \mathbb{S}^{2}$,  $\mathbb{RP}^4,$  the orientable and non-orientable $\mathbb S^3$-bundles over $\mathbb S^1$ and the $K3$-surface, together with their connected sums, possibly by taking copies with reversed orientation, too).

We prove that each PL 4-manifold in the above class attains both the bounds of Theorem \ref{theorem 0}.

\begin{theorem} \label{theorem 1}
Let $M$ be a  PL $4$-manifold with $rk(\pi_1(M))=m.$
If $M$ admits semi-simple crystallizations, then:
$$ \begin{array}{lll}
\mathit{k}(M) &=& 3 \chi (M) + 10m -6;\\
\mathcal G(M)&=& 2 \chi (M) + 5m -4;\\
\mathit{k}(M) &=& \frac{3 \mathcal G(M)  +5m} 2.
\end{array}
$$
\end{theorem}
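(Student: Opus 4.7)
Plan: Since Theorem~\ref{theorem 0} already supplies matching lower bounds, it suffices to produce, from any semi-simple crystallization $(\Gamma,\gamma)$ of type $m$ representing $M$ (say of order $2p$), matching upper bounds for $\mathit{k}(M)$ and $\mathcal G(M)$. The approach is to compute the $f$-vector of the associated colored triangulation $K=K(\Gamma)$ in two independent ways — thereby pinning down $p$ — and then to use an averaging argument over cyclic color-permutations to force every regular embedding of $\Gamma$ to attain the bound.

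Step one (face counts): direct inspection gives $f_0 = 5$ (contractedness), $f_4 = 2p$, and $f_3 = 5p$ (each color class of edges is a perfect matching on the vertices of $\Gamma$); the semi-simple hypothesis gives $f_1 = 10(m+1)$, since for each of the $\binom{5}{2}=10$ pairs $\{i,j\}$ of colors we have $g_{\Delta_4 \setminus \{i,j\}} = m+1$. The Euler characteristic identity then yields
$$f_2 = \chi(M) + 3p + 10m + 5. \qquad (*)$$

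Step two (sphere-link identity): for each triple $T = \{a,b,c\}$ of colors, the connected components of $\Gamma_T$ are in bijection with the $1$-simplices of $K$ labelled $\Delta_4 \setminus T$, each component encoding the colored link of the corresponding $1$-simplex in $M$. Since $M$ is a PL $4$-manifold, such a link is a PL $2$-sphere; hence the $2$-complex $|\Gamma_T|$ is a disjoint union of $g_T = m+1$ copies of $\mathbb S^2$, of total Euler characteristic $2(m+1)$. Counting cells of $|\Gamma_T|$ directly ($2p$ triangles, $3p$ edges and $g_{ab}+g_{ac}+g_{bc}$ vertices) gives
$$g_{ab} + g_{ac} + g_{bc} = p + 2(m+1) \qquad \text{for every triple } T.$$
Summing over the ten triples (each pair appearing in exactly three of them) yields $3f_2 = 10(p+2m+2)$; combining with $(*)$ forces
$$p = 3\chi(M) + 10m - 5,$$
so $\mathit{k}(M) \le p-1 = 3\chi(M) + 10m - 6$, and together with Theorem~\ref{theorem 0} this is the first equality.

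Step three (regular genus via averaging): substituting the value of $p$ gives $f_2 = 10(\chi(M) + 4m - 1)$. For any cyclic permutation $\varepsilon$ of $\Delta_4$ the standard regular-embedding formula reads
$$\rho_\varepsilon(\Gamma) = 1 + \frac{3p - \sum_{i \in \mathbb Z_5} g_{\varepsilon_i \varepsilon_{i+1}}}{2},$$
and the lower bound of Theorem~\ref{theorem 0} applied to $\rho_\varepsilon(\Gamma) \ge \mathcal G(M)$ yields $\sum_i g_{\varepsilon_i \varepsilon_{i+1}} \le f_2/2$ for every $\varepsilon$. But summing this quantity over the twelve Hamiltonian cycles of $K_5$ — each of the ten color pairs lying on exactly six of them — shows its \emph{average} to be precisely $f_2/2$; hence equality must hold for every $\varepsilon$, so $\rho(\Gamma) = 2\chi(M) + 5m - 4$, and combining with Theorem~\ref{theorem 0} gives the second equality. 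The third identity is then a direct substitution.

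The main obstacle is the sphere-link identity of step two: one must carefully invoke the crystallization-theoretic fact that a connected component of $\Gamma_T$ for a $3$-color subset $T$ encodes the link in $K$ of a $1$-simplex labelled $\Delta_4\setminus T$, which in a PL $4$-manifold is a PL $2$-sphere. Everything else in the argument reduces to two Euler characteristic comparisons and an averaging argument over Hamiltonian $5$-cycles.
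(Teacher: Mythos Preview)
Your argument is correct. Steps one and two are essentially the paper's own derivation repackaged: the paper obtains $2p = 6\chi(M) + 2f_1(K) - 30$ by eliminating $f_2$ via the full Dehn--Sommerville system, whereas you compute $f_2$ twice (once by the Euler relation, once by summing the link identity $g_{ab}+g_{ac}+g_{bc} = p + 2g_{abc}$ over all triples) and equate; these are two faces of the same identity, since the summed link identity is precisely the second Dehn--Sommerville equation. Note also that your ``sphere-link identity'' is nothing more than Proposition~\ref{preliminaries}(c), so there is no real obstacle there.

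Step three, however, is a genuinely different route. The paper reaches the regular-genus equality by reusing the explicit $10\times 10$ matrix inversion from the proof of Theorem~\ref{theorem 0}: with $q=0$ and all $t_{ijk}=0$ one reads off $g_{ij} = \tfrac{2(m+1)+\bar p}{3}$ for \emph{every} pair, and hence $\rho_\varepsilon(\Gamma)$ directly. Your averaging over the twelve Hamiltonian $5$-cycles sidesteps the matrix entirely: knowing from Theorem~\ref{theorem 0} that each cycle-sum $\sum_i g_{\varepsilon_i\varepsilon_{i+1}}$ is at most $f_2/2$, and that the average over all cycles is exactly $f_2/2$, forces equality throughout. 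This is cleaner and more self-contained; the price is that it does not immediately yield the paper's sharper byproduct (Proposition~\ref{theorem 1bis}(iii)) that all ten $g_{ij}$ are individually equal to $\chi(M)+4m-1$.
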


We also prove additivity of both gem-complexity and regular genus for the class of  PL $4$-manifolds admitting semi-simple crystallizations.

\begin{theorem} \label{theorem 2}
Let $M_1$ and $M_2$ be two PL $4$-manifolds admitting semi-simple crystallizations.
Then,
 $$ \mathit{k}(M_1 \# M_2) = \mathit{k}(M_1) + \mathit{k}(M_2)   \quad and \quad  \mathcal G(M_1 \# M_2) = \mathcal G(M_1) + \mathcal G(M_2).$$
\end{theorem}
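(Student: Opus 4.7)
The strategy is to deduce this additivity result directly from the preceding structural ingredients, rather than by a new direct combinatorial argument. The main point is that, for a manifold admitting a semi-simple crystallization, both $k(M)$ and $\mathcal G(M)$ are explicit affine functions of the pair $(\chi(M), \mathrm{rk}(\pi_1(M)))$, and both of these pair-invariants behave additively (up to a fixed correction) under connected sum of 4-manifolds.

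First I would invoke Proposition \ref{prop:connected_sum}, which asserts that the class of PL 4-manifolds admitting semi-simple crystallizations is closed under connected sum; applied to the hypothesis, this guarantees that $M_1 \# M_2$ itself admits a semi-simple crystallization. Hence Theorem \ref{theorem 1} can be applied simultaneously to $M_1$, $M_2$ and $M_1 \# M_2$, reducing the desired equalities to identities between the numerical invariants $\chi$ and $\mathrm{rk}(\pi_1(\cdot))$.

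Next I would record the two standard facts needed to carry out the reduction: for any closed 4-manifolds, $\chi(M_1 \# M_2) = \chi(M_1) + \chi(M_2) - 2$ (since $\chi(\mathbb S^4)=2$ and connected sum removes an open 4-ball from each summand), and, by Grushko's theorem applied to the free product $\pi_1(M_1 \# M_2) \cong \pi_1(M_1) * \pi_1(M_2)$, one has $\mathrm{rk}(\pi_1(M_1 \# M_2)) = \mathrm{rk}(\pi_1(M_1)) + \mathrm{rk}(\pi_1(M_2)) = m_1 + m_2$.

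Writing $m_i = \mathrm{rk}(\pi_1(M_i))$, substitution into the formula of Theorem \ref{theorem 1} yields
\[
k(M_1 \# M_2) = 3(\chi(M_1)+\chi(M_2)-2) + 10(m_1+m_2) - 6 = (3\chi(M_1)+10m_1-6) + (3\chi(M_2)+10m_2-6),
\]
which is exactly $k(M_1) + k(M_2)$; an identical computation with the coefficients $(2,5,-4)$ instead of $(3,10,-6)$ gives $\mathcal G(M_1 \# M_2) = \mathcal G(M_1) + \mathcal G(M_2)$. Thus the proof is essentially a bookkeeping exercise once the three inputs above are in place. The real content of the theorem is hidden in Proposition \ref{prop:connected_sum}: the potential obstacle — which, being proved earlier, we are allowed to assume — is that on performing the graph-theoretic connected sum of two semi-simple crystallizations one must verify that the resulting 1-skeleton still has precisely $(m_1+m_2)+1$ edges between every pair of vertices, so that the new crystallization is semi-simple of the correct type $m_1+m_2$.
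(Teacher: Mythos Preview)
Your proposal is correct and follows essentially the same route as the paper: invoke Proposition~\ref{prop:connected_sum} to ensure $M_1\#M_2$ admits a semi-simple crystallization, then apply Theorem~\ref{theorem 1} to all three manifolds and use the standard additivity relations $\chi(M_1\#M_2)=\chi(M_1)+\chi(M_2)-2$ and $\mathrm{rk}(\pi_1(M_1\#M_2))=m_1+m_2$. The paper's proof is terser (it omits the explicit arithmetic and the justification via Grushko), but the logical structure is identical.
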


Note that the inequality $ \mathit{k}(M \# M^{\prime}) \leq  \mathit{k}(M) + \mathit{k}(M^{\prime})$ (resp. $\mathcal G(M \# M^{\prime}) \le  \mathcal G(M) + \mathcal G(M^{\prime})$) can be stated for all PL $d$-manifolds by direct estimation of $\mathit{k}(M \# M^{\prime})$ (resp. of $\mathcal G(M \# M^{\prime})$) on
any crystallization $(\Gamma \# \Gamma^{\prime},\gamma \# \gamma^{\prime})$ obtained by graph-connected sum (see Subsection \ref{crystal}),
when $(\Gamma, \gamma)$ and $(\Gamma^{\prime}, \gamma^{\prime})$ are assumed to be  crystallizations of $M$ and $M^{\prime}$ respectively, realizing gem-complexity (resp. regular genus) of the represented $d$-manifolds.
Moreover, we point out that the additivity of  regular genus under connected sum has been conjectured, and the associated (open) problem is significant especially in dimension four. In fact, in dimension four, additivity of regular genus,
at least in the simply-connected case, would imply the 4-dimensional Smooth Poincar\'e Conjecture, in virtue of a well-known Wall's Theorem (\cite{[W]}).

\bigskip

Finally, as an application of Theorem \ref{theorem 0}, we provide lower bounds for regular genus and gem-complexity of product 4-manifolds, which strictly improve previous results (see Section \ref{sec:consequences}).

\section{Preliminaries}  \label{sec:prelim}

\subsection{Basic notions on crystallization theory} \label{crystal}

Crystallization theory is a representation method for the whole class of piecewise linear (PL) manifolds, without restrictions about dimension, connectedness, orientability or boundary properties. In the following brief review, however, we restrict our attention to the closed and connected case, which is the one of interest of the present paper.  We refer to \cite{bm08} for standard terminology on graphs, and to \cite{bj84} for CW-complexes and related notions.

\smallskip

A {\it (d+1)-colored graph} is a pair
$(\Gamma,\gamma)$, where $\Gamma= (V(\Gamma),$ $E(\Gamma))$ is a regular multigraph (i.e. multiple edges are allowed, while loops are forbidden) of degree $d+1$ and $\gamma : E(\Gamma) \to \Delta_d=\{0,1, \dots , d\}$ is a proper edge-coloring (i.e. $\gamma(e) \ne \gamma(f)$ for any pair $e,f$ of adjacent edges).

\smallskip

The elements of the set $\Delta_d$ are called the {\it colors} of
$\Gamma$; moreover, for every $i\in \Delta_d$, an {\it $i$-colored
edge} is an element $e \in E(\Gamma)$ such that $\gamma(e)=i.$

For each $B \subseteq \Delta_d$ with $h$ elements, then the
graph $\Gamma_B =(V(\Gamma), \gamma^{-1}(B))$ is a $h$-colored graph with edge-coloring $\gamma|_{\gamma^{-1}(B)}$.
If $\Gamma_{\Delta_d \setminus\{c\}}$ is connected for all $c\in \Delta_d$, then  $(\Gamma,\gamma)$ is called {\em contracted}.

Let $(\Gamma_1,\gamma_1)$ and $(\Gamma_2,\gamma_2)$ be two disjoint $(d+1)$-colored graphs and let $v_i \in V_i$
for any $i \in \{1,2\}.$  The {\em graph connected sum} of $\Gamma_1$, $\Gamma_2$  with respect to vertices $v_1, v_2$ (denoted by $(\Gamma_1\#_{v_1v_2}\Gamma_2, \gamma_1 \#\gamma_2)$, or simply $(\Gamma_1\#\Gamma_2, \gamma_1 \#\gamma_2)$ when vertices $v_1, v_2$ may be understood)
is the graph obtained from $\Gamma_1$ and $\Gamma_2$ by deleting $v_1$ and $v_2$ and welding the ``hanging'' edges of the same color.

\smallskip

Each $(d+1)$-colored graph uniquely determines a $d$-dimensional simplicial cell-complex ${\mathcal K}(\Gamma)$, which is said to be {\it associated to $\Gamma$}:
\begin{itemize}
\item{} for every vertex $v\in V(\Gamma)$, take a $d$-simplex $\sigma(v)$ and label injectively its $d+1$ vertices by the colors of $\Delta_d$;
\item{} for every $i$-colored edge between $v,w\in V(\Gamma)$, identify the ($d-1$)-faces of $\sigma(v)$ and $\sigma(w)$ opposite to $i$-labelled vertices, so that equally labelled vertices coincide.
\end{itemize}

If the geometrical carrier $|{\mathcal K}(\Gamma)|$ is PL-homeomorphic to a PL $d$-manifold $M$, then the $(d+1)$-colored graph $(\Gamma,\gamma)$ is said to {\it represent} $M$; if, in addition, $(\Gamma,\gamma)$ is contracted, then it is called a {\it crystallization} of $M$.

In both cases,  ${\mathcal K}(\Gamma)$ turns out to be a {\it pseudo-triangulation} of $M$; by taking into account the vertex-labelling inherited from $\gamma$, ${\mathcal K}(\Gamma)$ is also called a {\it colored triangulation} of $M$.

Note that, if $(\Gamma, \gamma)$ is a crystallization of a PL $d$-manifold $M$, then the number of vertices in ${\mathcal K}(\Gamma)$ is $d+1$ (and hence ${\mathcal K}(\Gamma)$ is a {\it contracted pseudo-triangulation} of $M$).  On the other hand, if $K$ is a contracted pseudo-triangulation of $M,$ then the dual graph $\Lambda(K)$ gives rise to a crystallization of $M$.

\smallskip

The following proposition collects some classical results of crystallization theory, which will be useful in the present paper. For details see \cite{fgg86}, together with its references.

\begin{proposition} \label{preliminaries}
Let $(\Gamma,\gamma)$ be a crystallization of a PL $d$-manifold $M$, with $d \geq 3$. Then:
\begin{itemize}
\item[(a)] $M$ is orientable if and only if \ $\Gamma$ is bipartite.
\item[(b)] For each $B \subseteq \Delta_d$ with $h$ elements, there is a bijection between ($d-h$)-simplices of ${\mathcal K}(\Gamma)$ whose vertices are labelled by $\Delta_d \setminus B$  and connected components of $\Gamma_B.$
\item[(c)] For any distinct $r, s \in \Delta_d$ (resp. $i, j, k \in \Delta_d$), let  $g_{rs}$ (resp. $g_{ijk}$) denote the number of connected components of  $\Gamma_{\{r,s\}}$ (resp. $\Gamma_{\{i,j,k\}}$). Then,   $2g_{ijk}=g_{ij}+g_{ik}+g_{jk}-\frac{\#V(\Gamma)}{2}$ for any distinct $i,j,k \in \Delta_d$.
\item[(d)] For any distinct $i, j \in \Delta_d$, the set of connected components of  $\Gamma_{\Delta_d \setminus \{i,j\}}$, but one, is in bijection with a set of generators of the fundamental group $\pi_1(M).$
\item[(e)] If $(\Gamma^{\prime},\gamma^{\prime})$ is a crystallization of a PL $d$-manifold $M^{\prime}$ then, for each $v\in V(\Gamma)$ and $v^\prime\in V(\Gamma^\prime),$ the graph connected sum $(\Gamma \#_{vv^\prime} \Gamma^{\prime}, \gamma \# \gamma^{\prime})$ is a crystallization of a connected sum of $M$ and $M^\prime$. Moreover, if two distinct connected sums of $M$ and $M^\prime$ exist, they both may be represented via graph connected sum  of $\Gamma$ and $\Gamma^{\prime}$, by a suitable choice of $v, v^\prime.$
\end{itemize}
\end{proposition}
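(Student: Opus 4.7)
The plan is to verify each of the five items via the dictionary between the colored graph $(\Gamma,\gamma)$ and the associated pseudo-complex ${\mathcal K}(\Gamma)$; all are classical consequences of the construction (see \cite{fgg86}).

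For (a), an orientation of $M$ corresponds to a consistent choice of sign on the $d$-simplices of ${\mathcal K}(\Gamma)$ so that any two identified along a common $(d-1)$-face carry opposite signs. Since adjacent $d$-simplices are exactly adjacent vertices of $\Gamma$, this is just a $2$-coloring of $V(\Gamma)$ in which every edge connects vertices of opposite colors, namely bipartiteness. For (b), recall that the identification rule glues the $(d-1)$-faces of $\sigma(v)$ and $\sigma(w)$ opposite to the $i$-labelled vertices along every $i$-colored edge $vw$; hence a single $(d-h)$-face of ${\mathcal K}(\Gamma)$ labelled by $\Delta_d \setminus B$ is obtained by iterated identifications along edges of colors in $B$, and therefore is in bijection with a connected component of $\Gamma_B$.

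For (c), I would apply (b) to each connected component $H$ of $\Gamma_{\{i,j,k\}}$ on say $n_H$ vertices: it represents the link of a $(d-3)$-simplex of ${\mathcal K}(\Gamma)$ and, since $M$ is a PL $d$-manifold with $d \geq 3$, this link is a PL $2$-sphere. Counting its $0$-, $1$- and $2$-cells via (b) yields $\chi(H) = g^H_{ij} + g^H_{ik} + g^H_{jk} - n_H/2 = 2$; summing over all $g_{ijk}$ components produces the stated identity. For (d), one considers the $2$-subcomplex of ${\mathcal K}(\Gamma)$ whose cells correspond, respectively, to vertices of $\Gamma$, to $\{i,j\}$-colored edges of $\Gamma$ and to components of $\Gamma_{\Delta_d \setminus \{i,j\}}$; since $d \geq 3$ this subcomplex captures $\pi_1(M)$, and choosing a spanning tree of its $1$-skeleton turns all but one of the components of $\Gamma_{\Delta_d \setminus \{i,j\}}$ into generators of a standard presentation. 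For (e), the graph connected sum is the geometric analogue of removing the open stars of $\sigma(v)$ and $\sigma(v')$ -- each a $d$-ball with $(d-1)$-sphere boundary -- and gluing these boundaries via the color-preserving identification; contractedness of the summands is inherited, so the outcome is again a crystallization, and the freedom in choosing $v, v'$ realizes exactly the distinct connected sums, when they exist.

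The main obstacle is (c): the identity depends on the crucial fact that each component of $\Gamma_{\{i,j,k\}}$ really represents a $2$-sphere, which is precisely where the PL-manifold hypothesis on $M$ enters quantitatively via the sphericity of links of $(d-3)$-simplices. The remaining items reduce to the construction of ${\mathcal K}(\Gamma)$ together with standard deformation-retract and handle-decomposition arguments.
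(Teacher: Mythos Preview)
The paper itself supplies no proof of this proposition: it is stated as a collection of classical facts with a reference to \cite{fgg86}. Your sketches therefore go well beyond what the authors do, and for (a), (b), (c), and (e) they are the standard arguments and are correct as written.

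Your treatment of (d), however, is garbled. You describe ``the $2$-subcomplex of ${\mathcal K}(\Gamma)$ whose cells correspond, respectively, to vertices of $\Gamma$, to $\{i,j\}$-colored edges of $\Gamma$ and to components of $\Gamma_{\Delta_d \setminus \{i,j\}}$''. But vertices of $\Gamma$ correspond to $d$-simplices of ${\mathcal K}(\Gamma)$, $\{i,j\}$-edges of $\Gamma$ to $(d-1)$-simplices, and components of $\Gamma_{\Delta_d \setminus \{i,j\}}$ to $1$-simplices: these are not the cells of any $2$-complex, and certainly not of a subcomplex of ${\mathcal K}(\Gamma)$. The correct argument works with the actual $2$-skeleton of ${\mathcal K}(\Gamma)$ (which carries $\pi_1(M)$ since $d\geq 3$): its $d+1$ vertices are the color-labelled $0$-simplices, and its edges between the vertices labelled $i$ and $j$ are exactly the components of $\Gamma_{\Delta_d\setminus\{i,j\}}$. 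The point you are missing is \emph{why} one may choose a spanning tree that contains exactly one $\{i,j\}$-edge and accounts for all other edges --- this uses contractedness: since $\Gamma_{\hat i}$ and $\Gamma_{\hat j}$ are connected, the subcomplex of ${\mathcal K}(\Gamma)$ spanned by the vertices $\Delta_d\setminus\{i\}$ (resp.\ $\Delta_d\setminus\{j\}$) is connected, and one can collapse along it so that only the $\{i,j\}$-edges survive as possible generators, one of which is then killed by the tree. Once you fix this description, the spanning-tree conclusion you state is correct.
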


\subsection{The regular genus of PL $d$-manifolds}\label{sec:genus}

As already briefly recalled in Section 1, the notion of {\it regular genus} is strictly related to the existence of {\it regular embeddings} of crystallizations into closed surfaces, i.e. embeddings whose regions are bounded by the images of bi-colored cycles, with colors consecutive in a fixed permutation of the color set.

More precisely, according to \cite{ga81},  if $(\Gamma, \gamma)$ is a crystallization of an orientable (resp. non-orientable) PL $d$-manifold $M$ ($d \geq 3$), for each cyclic permutation  $\varepsilon= (\varepsilon_0, \varepsilon_1, \varepsilon_2, \dots , \varepsilon_d)$ of $\Delta_d$,  a regular embedding $i_\varepsilon : \Gamma \hookrightarrow F_\varepsilon$ exists,  where $F_{\e}$ is the closed orientable (resp. non-orientable) surface with Euler characteristic
\begin{eqnarray}\label{relation_chi}
\chi_{\varepsilon}(\Gamma)= \sum_{i \in \mathbb{Z}_{d+1}}g_{\varepsilon_i\varepsilon_{i+1}} + (1-d) \ \frac{\#V(\Gamma)}{2}.
\end{eqnarray}
In the orientable (resp. non-orientable) case, the integer
$$\rho_{\varepsilon}(\Gamma) = 1 - \chi_{\varepsilon}(\Gamma)/2$$
is equal to the genus (resp. half of the genus) of the surface $F_{\varepsilon}$.

Then, by Definition 1, the regular genus $\rho (\Gamma)$ of $(\Gamma,\gamma)$  and the regular genus $\mathcal G (M)$ of $M$ are:
$$\rho(\Gamma)= \min \{\rho_{\varepsilon}(\Gamma) \ | \  \varepsilon \ \text{ is a cyclic permutation of } \ \Delta_d\};$$
$$\mathcal G(M) = \min \{\rho(\Gamma) \ | \  (\Gamma,\gamma) \mbox{ is a crystallization of } M\}.$$

\medskip

Note that $\mathcal G(M) \geq rk (\pi_1(M))$ is known to hold, for any PL $d$-manifold ($d\geq 3$). In the 4-dimensional settings, the following results about the ``gap'' between the regular genus and the rank of the fundamental group of a PL 4-manifold have been obtained (see \cite{[C], [CM]}).

\begin{proposition} \label{gap_genus-rank}
Let $M$ be a PL $4$-manifold. Then:
\begin{itemize}
\item[(a)] If \ $\mathcal G(M) = rk (\pi_1(M))= \rho,$ \ then $M$ is PL-homeomorphic to \ $\#_{\rho} (\mathbb S^1 \otimes \mathbb S^3)$,  \ where $\mathbb S^1 \otimes \mathbb S^3$ denotes either the orientable or non-orientable $\mathbb S^3$-bundle over $\mathbb S^1$, according to the orientability of $M$.
\item[(b)] No PL $4$-manifold $M$ exists with \ $\mathcal G(M) = rk (\pi_1(M)) +1$.
\item[(c)] If \ $\mathcal G(M) = rk (\pi_1(M)) +2 $ and $\pi_1(M)= \ast_m \mathbb{Z}$, \ then $M$ is PL-homeomorphic to \  $\mathbb{CP}^2\#_m (\mathbb S^1 \otimes \mathbb S^3)$.
 \end{itemize}
\end{proposition}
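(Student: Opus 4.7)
The plan is to combine the standard lower bound $\mathcal G(M) \ge rk(\pi_1(M))$ — whose proof reads a generating set of $\pi_1(M)$ off a regular embedding via Proposition \ref{preliminaries}(d) — with a rigidity analysis of the crystallizations attaining, or nearly attaining, equality.

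For part (a), I would fix a crystallization $(\Gamma,\gamma)$ of $M$ realizing $\mathcal G(M) = m$ together with an optimal cyclic permutation $\varepsilon$; set $2p = \#V(\Gamma)$ and $A = \sum_{i \in \mathbb{Z}_5} g_{\varepsilon_i\varepsilon_{i+1}}$. By \eqref{relation_chi} the equality $\rho_\varepsilon(\Gamma) = m$ becomes $A = 3p + 2 - 2m$. Proposition \ref{preliminaries}(d) gives $g_{k\ell n} \ge m+1$ whenever $\{i,j,k,\ell,n\} = \Delta_4$, which, combined with the identity $2g_{ijk}=g_{ij}+g_{ik}+g_{jk}-p$ of Proposition \ref{preliminaries}(c), yields explicit lower bounds on partial sums of the $g_{ij}$'s. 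Playing these against the trivial upper bound $g_{ij} \le p$, the extremal regime forces a controlled number of the bicoloured cycles contributing to $A$ to be length-two cycles (dipoles); the classical dipole-elimination moves of crystallization theory, together with Proposition \ref{preliminaries}(e), then decompose $\Gamma$ as a graph connected sum of $m$ minimal crystallizations, each representing an $\mathbb{S}^3$-bundle over $\mathbb{S}^1$ (orientable or non-orientable, according to the bipartiteness criterion of Proposition \ref{preliminaries}(a) matching the orientability of $M$).

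Parts (b) and (c) follow the same template, with $A = 3p - 2m$ and $A = 3p - 2m - 2$ respectively. In (b) I would show that the corresponding linear-combinatorial system has no solution: the obstruction is parity-flavoured, since every bicoloured cycle has even length and the resulting divisibility constraints on the $g_{ij}$'s are incompatible with the value $A = 3p - 2m$ once the rank inequality from (d) is imposed. In (c), the extra hypothesis $\pi_1(M) = \ast_m \mathbb{Z}$ tightens the bound in (d) to equality on certain components and lets the dipole-elimination argument run one step further; after reduction one is left with a single surviving summand of regular genus two, and the very short list of PL $4$-manifolds admitting a crystallization of that size, combined with the prescribed Euler characteristic and fundamental group, singles out $\mathbb{CP}^2$.

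The hard part throughout is the rigidity step: translating an equality case in a linear-combinatorial bound into an actual decomposition of the crystallization as a graph connected sum. The delicate bookkeeping for the dipole eliminations, and the verification that the irreducible pieces left at the end of the reduction are exactly the minimal crystallizations of $\mathbb{S}^4$, $\mathbb{S}^1 \otimes \mathbb{S}^3$, and — in part (c) — $\mathbb{CP}^2$, is the substance of the arguments developed in \cite{[C], [CM]} and does not appear to admit a substantially shorter route.
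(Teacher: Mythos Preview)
The paper does not prove this proposition at all: it is stated in Subsection~\ref{sec:genus} as a background result, with the words ``the following results \ldots\ have been obtained (see \cite{[C], [CM]})'' immediately preceding the statement, and no proof is given. So there is no ``paper's own proof'' to compare your proposal against.

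Your proposal is a reasonable high-level outline of the arguments in \cite{[C], [CM]}, and you yourself acknowledge this in your final paragraph. As an outline it is broadly faithful to those sources: the combination of the Euler-characteristic identity \eqref{relation_chi}, the fundamental-group bound of Proposition~\ref{preliminaries}(d), the triangle relation of Proposition~\ref{preliminaries}(c), and the structural relations $g_{(i-1)(i+1)}=g_{(i-1)(i)(i+1)} + \rho - \rho_{\hat i}$ and $g_{(i-1)(i+1)(i+2)}= 1 + \rho - \rho_{\hat i} - \rho_{\hat{i+3}}$ quoted at the end of Subsection~\ref{sec:genus} is indeed the engine that drives the rigidity analysis in \cite{[CM]}. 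One caution: your description of the obstruction in~(b) as ``parity-flavoured'' and tied to even cycle lengths is not quite how the argument runs in \cite{[CM]}; the actual obstruction comes from the system of inequalities on the $\rho_{\hat i}$'s forcing an impossible configuration, rather than from a divisibility constraint on the $g_{ij}$'s per se. But since you are in any case deferring the substantive work to those references, this is a matter of description rather than a gap.
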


Moreover, if $(\Gamma, \gamma)$ is a crystallization of a PL $4$-manifold and $\varepsilon = (\varepsilon_0,\dots, \varepsilon_4)$ is a cyclic permutation of the color set $\Delta_4$, then
 \cite[relations (1$_j$) and (2$_j$)]{[CM]} yields:
$$g_{(i-1)(i+1)}=g_{(i-1)(i)(i+1)} + \rho - \rho_{\hat i} \ \ \  \quad \text {and}  \quad \ \ \
g_{(i-1)(i+1)(i+2)}= 1 + \rho - \rho_{\hat i} - \rho_{\hat {i+3}},$$
where $\rho$ denotes $\rho_{\varepsilon}(\Gamma)$ and, for any $i \in \mathbb Z_5,$  $\rho_{\hat i}$ denotes $\rho_{\varepsilon} (\Gamma_{\Delta_4 \setminus \{i\}}).$

\section{Lower bounds for regular genus and gem-complexity in dimension 4}

Let us now prove the general result (Theorem \ref{theorem 0}, already stated in Section 1) yielding lower bounds for gem-complexity and regular genus of any PL 4-manifold. In Section \ref{sec:semi-simple} it will be of fundamental importance in order to analyze the properties of PL 4-manifolds admitting semi-simple crystallizations, while in Section \ref{sec:consequences} it will enable to obtain new estimations for both invariants in the case of product 4-manifolds. On the other hand we believe that, thanks to its generality, it could be useful to investigate PL 4-manifolds also in wider contexts.

\bigskip

\noindent {\em Proof of Theorem} \ref{theorem 0}.
Let $(\Gamma,\gamma)$ be a crystallization of $M$. If  $2p=\#V(\Gamma),$ then $X=\mathcal{K}(\Gamma)$ is a $2p$-facet contracted pseudo-triangulation of the PL 4-manifold $M$.
The Dehn-Sommerville equations in dimension four yield:
	$$
	\begin{array}{lllllllllll}
		f_0 (X) &-& f_1(X) &+& f_2(X) &-& f_3(X) &+& f_4(X) &=& \chi(M), \\
		&& 2 f_1(X) &-& 3 f_2(X) &+& 4 f_3(X) &-& 5 f_4(X) &=& 0, \\
		&&&&&& 2 f_3(X) &-& 5 f_4(X) &=& 0. \\
	\end{array}
	$$
\noindent Since $f_0 (X) = 5$ by construction and $f_4(X)=\#V(\Gamma)=2p$, the following equality holds:
\begin{eqnarray}\label{ch6:eq1}
2p = 6 \chi(M)+2f_1(\mathcal{K}(\Gamma))-30.
\end{eqnarray}
\noindent Since $rk(\pi_1(M))=m$, Proposition \ref{preliminaries}(d) implies
$g_{ijk} \geq m+1$ for any distinct $i,j,k \in \Delta_4$. Therefore, $f_1(\mathcal{K}(\Gamma)) = \sum_{0 \leq i <j<k\leq 4}g_{ijk} \geq 10(m+1)$. Hence, by equation \eqref{ch6:eq1}:
\begin{eqnarray}\label{ch6:eq2}
 2p \geq 6 \chi(M)+ 20 (m+1) -30 \ = \ 6 \chi(M) + 20 m - 10.
\end{eqnarray}
\noindent The first inequality of Theorem 1 now follows from equation \eqref{ch6:eq2} and Definition \ref{defn:gem-complexity}.

\medskip

Let us now prove the second inequality.
From equation \eqref{ch6:eq2}, we have that $ 2\bar{p}=6 \chi(M)+10(2m-1)$ is the minimal possible order of a crystallization of $M.$

Let  $(\Gamma,\gamma)$ be a crystallization of $M$. Then, $\#V(\Gamma)=2\bar p+2q$ for
some non-negative integer $q$. This implies $6\chi(M)+2f_1(\mathcal{K}(\Gamma))-30 = 6 \chi(M)+10(2m-1) +2q$.
Thus, $2\sum_{0 \leq i <j<k\leq 4}$ $g_{ijk}-30 = 10(2m-1) +2q$. Again, $g_{ijk} \geq m+1$ for any distinct $i,j,k \in \Delta_4$. So, let us assume
$g_{ijk}=(m+1)+t_{ijk}$ where $t_{ijk} \in \mathbb{Z}$, $t_{ijk} \geq 0$.
Thus, $20(m+1)+ 2\sum_{0 \leq i <j<k\leq 4}t_{ijk}-30 = 20m-10 +2q$ and hence
$q=\sum_{0 \leq i <j<k\leq 4}t_{ijk}$. Now, for any cyclic permutation
$\varepsilon = (\varepsilon_0,\varepsilon_1, \dots, \varepsilon_4)$
of the colors we have $\chi_{\varepsilon}(\Gamma)=\sum_{i \in \mathbb{Z}_5}g_{\varepsilon_i\varepsilon_{i+1}}-3(\bar p+q)$
(by relation \eqref{relation_chi} of the previous Section).

On the other hand, by Proposition \ref{preliminaries}(c), we know that $2g_{ijk}=g_{ij}+g_{ik}+g_{jk}-\frac{\#V(\Gamma)}{2}$ for any distinct $i,j,k \in \Delta_4$.
Thus, we have $g_{ij}+g_{ik}+g_{jk}=2g_{ijk}+(\bar p+q)$ for $0 \leq i <j<k\leq 4$. This gives ten linear equations which can be written in the following form:
$$ AX=B,$$
\noindent where
$$A = \begin{bmatrix}
1 & 1 & 0 & 0 & 1 & 0 & 0 & 0 & 0 & 0\\
1 & 0 & 1 & 0 & 0 & 1 & 0 & 0 & 0 & 0\\
1 & 0 & 0 & 1 & 0 & 0 & 1 & 0 & 0 & 0\\
0 & 1 & 1 & 0 & 0 & 0 & 0 & 1 & 0 & 0\\
0 & 1 & 0 & 1 & 0 & 0 & 0 & 0 & 1 & 0\\
0 & 0 & 1 & 1 & 0 & 0 & 0 & 0 & 0 & 1\\
0 & 0 & 0 & 0 & 1 & 1 & 0 & 1 & 0 & 0\\
0 & 0 & 0 & 0 & 1 & 0 & 1 & 0 & 1 & 0\\
0 & 0 & 0 & 0 & 0 & 1 & 1 & 0 & 0 & 1\\
0 & 0 & 0 & 0 & 0 & 0 & 0 & 1 & 1 & 1\\
	\end{bmatrix}, \, X=
	\begin{bmatrix}
	g_{01}\\
	g_{02}\\
	g_{03}\\
	g_{04}\\
	g_{12}\\
	g_{13}\\
	g_{14}\\
	g_{23}\\
	g_{24}\\
	g_{34}\\
	\end{bmatrix} \mbox{and } B=
	\begin{bmatrix}
	2g_{012}+ \bar p+q\\
	2g_{013}+ \bar p+q\\
	2g_{014}+ \bar p+q\\
	2g_{023}+ \bar p+q\\
	2g_{024}+ \bar p+q\\
	2g_{034}+ \bar p+q\\
	2g_{123}+ \bar p+q\\
	2g_{124}+ \bar p+q\\
	2g_{134}+ \bar p+q\\
	2g_{234}+ \bar p+q\\
	\end{bmatrix}.
	$$
\noindent Therefore
$$ X= A^{-1}B,$$
\noindent where
 $$A^{-1}=
 \begin{bmatrix}
1/3 & 1/3 & 1/3 & -1/6 & -1/6 & -1/6 & -1/6 & -1/6 & -1/6 & 1/3\\
1/3 & -1/6 & -1/6 & 1/3 & 1/3 & -1/6 & -1/6 & -1/6 & 1/3 & -1/6\\
-1/6 & 1/3 & -1/6 & 1/3 & -1/6 & 1/3 & -1/6 & 1/3 & -1/6 & -1/6\\
-1/6 & -1/6 & 1/3 & -1/6 & 1/3 & 1/3 & 1/3 & -1/6 & -1/6 & -1/6\\
1/3 & -1/6 & -1/6 & -1/6 & -1/6 & 1/3 & 1/3 & 1/3 & -1/6 & -1/6\\
-1/6 & 1/3 & -1/6 & -1/6 & 1/3 & -1/6 & 1/3 & -1/6 & 1/3 & -1/6\\
-1/6 & -1/6 & 1/3 & 1/3 & -1/6 & -1/6 & -1/6 & 1/3 & 1/3 & -1/6\\
-1/6 & -1/6 & 1/3 & 1/3 & -1/6 & -1/6 & 1/3 & -1/6 & -1/6 & 1/3\\
-1/6 & 1/3 & -1/6 & -1/6 & 1/3 & -1/6 & -1/6 & 1/3 & -1/6 & 1/3\\
1/3 & -1/6 & -1/6 & -1/6 & -1/6 & 1/3 & -1/6 & -1/6 & 1/3 & 1/3\\
	\end{bmatrix}.$$	
\noindent Since
$g_{ijk}=(m+1)+t_{ijk}$ for any distinct $i,j,k \in \Delta_4$, we have
$$B = M+\sum_{0 \leq i <j<k\leq 4} T_{ijk},$$	
\noindent where
$$M= \begin{bmatrix}
	2(m+1)+ \bar p+q\\
	2(m+1)+ \bar p+q\\
	2(m+1)+ \bar p+q\\
	2(m+1)+ \bar p+q\\
	2(m+1)+ \bar p+q\\
	2(m+1)+ \bar p+q\\
	2(m+1)+ \bar p+q\\
	2(m+1)+ \bar p+q\\
	2(m+1)+ \bar p+q\\
	2(m+1)+ \bar p+q\\
	\end{bmatrix} \mbox{ and }
	T_{ijk}= \begin{bmatrix}
	0\\
	0\\
	0\\
	0\\
	2t_{ijk}\\
	0\\
	0\\
	0\\
	0\\
	0\\
	\end{bmatrix}.$$
\noindent Thus,
$$X = A^{-1}M+\sum_{0 \leq i<j<k\leq 4} A^{-1} T_{ijk}.$$
Therefore,
$g_{\varepsilon_i\varepsilon_{i+1}}=\frac{2(m+1)+\bar p+q}{3} + 2\sum_{0 \leq l <j<k\leq 4}c_{ljk}^{\varepsilon_i\varepsilon_{i+1}}t_{ljk}$, where $c_{ljk}^{rs}$ is the element of $A^{-1}$ corresponding to
$\{r,s\}$-row and $\{l,j,k\}$-column of $A^{-1}$.
Now observe that, for any fixed $\{l,j,k\}$-column of $A^{-1}$, there exist $r,s,v$
such that $c_{ljk}^{rs}=c_{ljk}^{rv}=c_{ljk}^{sv}=1/3$. Thus, for any cyclic permutation
$\varepsilon = (\varepsilon_0,\varepsilon_1, \dots, \varepsilon_4)$, at most three elements from the set $\{c_{ljk}^{\varepsilon_i\varepsilon_{i+1}} \ | \ i \in \mathbb{Z}_5\}$ are $1/3$ and remaining elements are $-1/6$.
Therefore,  $\sum_{i \in \mathbb{Z}_5} c_{ljk}^{\varepsilon_i\varepsilon_{i+1}} \leq 1/3+1/3+1/3-1/6-1/6=2/3$. Thus,
\begin{align*}
\chi_{\varepsilon}(\Gamma)& = \sum_{i \in \mathbb{Z}_5}g_{\varepsilon_i\varepsilon_{i+1}}-3(\bar p+q)\\
& =\sum_{i \in \mathbb{Z}_5}\Big(\frac{2(m+1)+ \bar p+ q}{3} + 2\sum_{0 \leq l <j<k\leq 4}c_{ljk}^{\varepsilon_i\varepsilon_{i+1}}t_{ljk}\Big)-3(\bar p+q)\\
&= 5 \, \frac{2(m+1)+ \bar p+q}{3} + 2\sum_{0 \leq l <j<k\leq 4}t_{ljk}
\sum_{i \in \mathbb{Z}_5}c_{ljk}^{\varepsilon_i\varepsilon_{i+1}}-3(\bar p+q)\\
& \leq  5 \, \frac{2(m+1)+ \bar p+q}{3}+ \frac 4 3\sum_{0 \leq l <j<k\leq 4}t_{ljk}-3(\bar p+q)\\
& =  5 \, \frac{2(m+1)+ \bar p+ q}{3}+ \frac 4 3 \, q -3(\bar p+q)\\
& =  \frac{10(m+1)-4 \bar p}{3}.
\end{align*}
\noindent Therefore, $\rho_{\varepsilon}(\Gamma) = 1 - \chi_{\varepsilon}(\Gamma)/2 \geq 1-\frac{5(m+1)-2 \bar p}{3}=
\frac{2(\bar p-1)-5m}{3}$. Since this is true for any cyclic permutation $\varepsilon$, we have:
$$\rho(\Gamma)=\min\{\rho_{\varepsilon}(\Gamma)\, | \, \varepsilon \ \mbox{is a cyclic  permutation of } \Delta_4\} \geq \frac{2(\bar p-1)-5m}{3}.$$
Since the crystallization $(\Gamma,\gamma)$ is arbitrary:
$$\mathcal G(M)=\min\{\rho(\Gamma) \, | \, \Gamma \ \mbox{is a crystallization of }  M\}  \geq \frac{2(\bar p-1)-5m} 3.$$
Finally, since $2\bar p = 6 \chi(M) + 10(2m - 1)$, we have:
$$\mathcal G(M) \geq \frac{(6 \chi(M) + 20 m - 12) - 5m}{3} =  2 \chi (M) + 5m -4.$$
\hfill $\Box$

\begin{remark} \label{compare GG} {\rm As a consequence of the inequality involving regular genus in Theorem \ref{theorem 0}, we have
$$ \chi(M)  \le  2 + \frac {\mathcal G(M)} 2 -  \frac {5 m} 2,$$
which improves the  inequality  $ \chi(M)  \le  2 + {\mathcal G(M)}/2 $  in \cite[Corollary 6.5]{[GG]}. On the other hand, additivity of regular genus is proved in \cite[Corollary 6.8 (b)]{[GG]} for the class of PL 4-manifolds characterized by $\mathcal G(M) \ = \ 2 \chi (M) - 4$. Now, in virtue of Theorem \ref{theorem 0}, the above class of PL 4-manifolds turns out to consist of simply-connected PL 4-manifolds.}
\end{remark}

\begin{remark} \label{rem:simple} {\rm According to \cite{Casali14GemCompl}, (simply-connected) PL 4-manifolds admitting simple crystallizations are characterized by $k(M)= 3 \beta_2(M);$ moreover, equality $\mathcal G(M)= 2 \beta_2(M)$ holds for any PL 4-manifold admitting simple crystallizations. Hence, they constitute a class of (simply-connected) PL 4-manifolds which attain both the bounds of Theorem \ref{theorem 0}.}
\end{remark}

\section{PL 4-manifolds admitting semi-simple crystallizations}
\label{sec:semi-simple}

In Section 1, the notion of {\it semi-simple crystallization} has been introduced, in terms of the 1-skeleton of the associated colored triangulation.
By Proposition \ref{preliminaries}(b), it is easy to check that  Definition 3 may be re-stated as follows, in terms of the number of components of the subgraph restricted to any triple of colors.

\begin{definition}
{ \rm A crystallization $(\Gamma,\gamma)$ of a PL $4$-manifold $M$ is called a {\em semi-simple crystallization of type $m$} if $g_{ijk} = m+1$ for any distinct $i,j,k \in \Delta_4$, where $m$ is the rank of the fundamental group of $M$.}
\end{definition}

The following definition is quite natural.

\begin{definition}
{\rm If $(\Gamma,\gamma)$ is a  semi-simple crystallization of type $m$ of a PL $4$-manifold $M$, then
we will say that $M$ {\it admits semi-simple crystallizations of type $m$} (or simply, that {\it $M$ admits semi-simple crystallizations}).}
\end{definition}

\begin{remark}\label{rem:characterization}
{\rm
The first part of the proof of Theorem \ref{theorem 0} (in particular, equation \eqref{ch6:eq1}) immediately implies that $(\Gamma,\gamma)$ is a semi-simple crystallization of type $m$ (that is, $f_1(\mathcal{K}(\Gamma))=10(m+1)$) if and only if $\#V(\Gamma) = 6 \chi(M)+20m-10.$  Hence, {\it PL $4$-manifolds admitting semi-simple crystallizations are characterized by} $\mathit{k}(M)= 3\chi(M)+10m-6$, where $m$ is the rank of the fundamental group of $M$.
}
\end{remark}

\begin{proposition} \label{prop:connected_sum}
Let $M$ and $M^{\prime}$ be two PL $4$-manifolds admitting semi-simple crystallizations.
Then,  $M \# M^{\prime}$ admits semi-simple crystallizations, too.
\end{proposition}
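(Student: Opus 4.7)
The plan is to use the characterization of semi-simple crystallizations provided by Remark \ref{rem:characterization}, which reduces the problem to a purely combinatorial vertex-count identity. Specifically, a crystallization of a PL $4$-manifold $N$ with $rk(\pi_1(N)) = n$ is semi-simple if and only if $\#V = 6\chi(N) + 20n - 10$. So, given semi-simple crystallizations $(\Gamma,\gamma)$ of $M$ and $(\Gamma',\gamma')$ of $M'$, it suffices to exhibit a crystallization of $M \# M'$ whose vertex count matches this formula (for the appropriate rank and Euler characteristic).

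The natural candidate is the graph connected sum $(\Gamma \#_{vv'} \Gamma', \gamma \# \gamma')$ obtained by choosing any $v \in V(\Gamma)$, $v' \in V(\Gamma')$ and welding hanging edges. By Proposition \ref{preliminaries}(e), this is a crystallization of $M \# M'$, and by construction its order is $\#V(\Gamma) + \#V(\Gamma') - 2$. Setting $m = rk(\pi_1(M))$ and $m' = rk(\pi_1(M'))$, the semi-simplicity of the two summands gives
\begin{align*}
\#V(\Gamma \#_{vv'} \Gamma') &= \bigl(6\chi(M) + 20m - 10\bigr) + \bigl(6\chi(M') + 20m' - 10\bigr) - 2 \\
&= 6\bigl(\chi(M) + \chi(M')\bigr) + 20(m + m') - 22.
\end{align*}
Using the standard identity $\chi(M \# M') = \chi(M) + \chi(M') - 2$ for connected sums of closed $4$-manifolds, the right-hand side rewrites as $6\chi(M \# M') + 20(m+m') - 10$.

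To apply the characterization of Remark \ref{rem:characterization}, I need to identify $m+m'$ with $rk(\pi_1(M \# M'))$. Since $\pi_1(M \# M') \cong \pi_1(M) \ast \pi_1(M')$ for closed manifolds of dimension $\ge 3$, Grushko's theorem yields $rk(\pi_1(M \# M')) = rk(\pi_1(M)) + rk(\pi_1(M')) = m + m'$. Combining this with the vertex-count computation above, the graph connected sum satisfies exactly the equality of Remark \ref{rem:characterization} for $M \# M'$, so it is a semi-simple crystallization of type $m + m'$. This completes the argument.

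The proof is essentially bookkeeping: the main conceptual ingredient is Grushko's theorem, while everything else follows from Proposition \ref{preliminaries}(e), the additivity of $\chi$ under $4$-dimensional connected sum, and the vertex-count characterization of semi-simplicity derived in Remark \ref{rem:characterization}. The only subtle point worth noting is that one must invoke Grushko in order to confirm that the bound attained by $(\Gamma \# \Gamma', \gamma \# \gamma')$ is precisely the semi-simple bound for the correct rank; without this, one would only know that the construction attains the lower bound of Theorem \ref{theorem 0} for some generating set, not that it realizes the true rank of the fundamental group of $M \# M'$.
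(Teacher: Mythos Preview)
Your argument is correct, and it reaches the same conclusion as the paper, but via a slightly different bookkeeping device. The paper works directly with Definition~4: it observes that under graph connected sum one has $\bar g_{ijk} = g_{ijk} + g'_{ijk} - 1 = (m+1) + (m'+1) - 1 = (m+m')+1$ for every triple of colors, which is the defining condition for a semi-simple crystallization of type $m+m'$. You instead route the verification through the vertex-count characterization of Remark~\ref{rem:characterization}, reducing the check to the identity $\#V(\Gamma\#\Gamma') = 6\chi(M\#M') + 20(m+m') - 10$.

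Both approaches require identifying $m+m'$ with $rk(\pi_1(M\#M'))$; you make this explicit via Grushko's theorem, whereas the paper leaves it implicit in the phrase ``the thesis now easily follows'' (and only states the rank additivity later, in the proof of Theorem~\ref{theorem 2}). Your route has the minor advantage of making that dependence visible; the paper's route is shorter and avoids invoking the Dehn--Sommerville machinery underlying Remark~\ref{rem:characterization}, since the behaviour of $g_{ijk}$ under graph connected sum is immediate from the construction.
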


\begin{proof}
Let  $(\Gamma,\gamma)$ (resp. $(\Gamma^{\prime},\gamma^{\prime})$) be a semi-simple crystallization  of $M$ (resp. $M^{\prime}$), with $g_{ijk} = m+1$ (resp. $g_{ijk}^{\prime} = m^{\prime} +1$) for any distinct $i,j,k \in \Delta_4$.
Let $(\bar \Gamma,\bar\gamma)$ be a crystallization of $M \# M^{\prime}$  obtained by graph connected sum of $(\Gamma,\gamma)$ and $(\Gamma^{\prime},\gamma^{\prime})$ (see Subsection \ref{crystal}).
By construction, $(\bar \Gamma,\bar\gamma)$ has $\bar g_{ijk} = m + m^{\prime} +1$ for all distinct $i,j,k \in \Delta_4$. The thesis now easily follows.
\end{proof}

\begin{figure}[ht]
\tikzstyle{vert}=[circle, draw, fill=black!100, inner sep=0pt, minimum width=4pt] \tikzstyle{vertex}=[circle, draw, fill=black!00, inner sep=0pt, minimum width=4pt] \tikzstyle{ver}=[]
\tikzstyle{extra}=[circle, draw, fill=black!50, inner sep=0pt, minimum width=2pt] \tikzstyle{edge} = [draw,thick,-] \tikzstyle{arrow} = [draw,thick,->]
\centering
\begin{tikzpicture}[scale=0.3]
\begin{scope}[shift={(-12,0)}]
\foreach \x/\y/\z in {-8/-4/v1,-4/4/v4,0/-4/v5,4/4/v8,8/-4/v9}{
 \node[vert] (\z) at (\y,\x){};}

\foreach \x/\y/\z in {-8/4/v2,-4/-4/v3,0/4/v6,4/-4/v7,8/4/v10}{
 \node[vertex] (\z) at (\y,\x){};}

 \path[edge] (v1) -- (v3);
\path[edge] (v3) -- (v5);
\draw [line width=2pt, line cap=rectengle, dash pattern=on 1pt off 1]  (v1) -- (v3);
\draw [line width=3pt, line cap=round, dash pattern=on 0pt off 2\pgflinewidth]  (v3) -- (v5);
\path[edge] (v5) -- (v7);
\path[edge, dotted] (v7) -- (v9);

 \path[edge] (v2) -- (v4);
\path[edge] (v4) -- (v6);
\draw [line width=2pt, line cap=rectengle, dash pattern=on 1pt off 1]  (v2) -- (v4);
\draw [line width=3pt, line cap=round, dash pattern=on 0pt off 2\pgflinewidth]  (v4) -- (v6);
\path[edge] (v6) -- (v8);
\path[edge, dotted] (v8) -- (v10);

 \path[edge] (v9) -- (v10);
\path[edge] (v1) -- (v2);
\draw [line width=2pt, line cap=rectengle, dash pattern=on 1pt off 1]  (v9) -- (v10);
\draw [line width=3pt, line cap=round, dash pattern=on 0pt off 2\pgflinewidth]  (v1) -- (v2);
\path[edge] (v3) -- (v4);
\path[edge, dotted] (v5) -- (v6);
\path[edge, dashed] (v7) -- (v8);

\draw[edge] plot [smooth,tension=1.5] coordinates{(v1)(0,-9)(v2)};
\draw[edge, dotted] plot [smooth,tension=1.5] coordinates{(v1)(0,-7)(v2)};
\draw[edge,dotted] plot [smooth,tension=1.5] coordinates{(v3)(0,-5)(v4)};
\draw[edge, dashed] plot [smooth,tension=1.5] coordinates{(v3)(0,-3)(v4)};
\draw[edge,dashed] plot [smooth,tension=1.5] coordinates{(v5)(0,-1)(v6)};
\draw[edge] plot [smooth,tension=1.5] coordinates{(v5)(0,1)(v6)};
\draw[line width=2pt, line cap=rectengle, dash pattern=on 1pt off 1] plot [smooth,tension=1.5] coordinates{(v5)(0,1)(v6)};
\draw[edge] plot [smooth,tension=1.5] coordinates{(v7)(0,3)(v8)};
\draw[line width=2pt, line cap=rectengle, dash pattern=on 1pt off 1] plot [smooth,tension=1.5] coordinates{(v7)(0,3)(v8)};
\draw[edge] plot [smooth,tension=1.5] coordinates{(v7)(0,5)(v8)};
\draw[line width=3pt, line cap=round, dash pattern=on 0pt off 2\pgflinewidth] plot [smooth,tension=1.5] coordinates{(v7)(0,5)(v8)};
\draw[edge] plot [smooth,tension=1.5] coordinates{(v9)(0,7)(v10)};
\draw[line width=3pt, line cap=round, dash pattern=on 0pt off 2\pgflinewidth] plot [smooth,tension=1.5] coordinates{(v9)(0,7)(v10)};
\draw[edge] plot [smooth,tension=1.5] coordinates{(v9)(0,9)(v10)};
\draw[edge, dashed] plot [smooth,tension=1] coordinates{(v1)(-5,-2)(-4,9)(v10)};
\draw[edge, dashed] plot [smooth,tension=1] coordinates{(v2)(5,-2)(4,9)(v9)};
 \end{scope}

\begin{scope}[shift={(12,0)}]
\foreach \x/\y/\z in {-8/-4/v1,-4/4/v4,0/-4/v5,4/4/v8,8/-4/v9}{
 \node[vert] (\z) at (\y,\x){};}

\foreach \x/\y/\z in {-8/4/v2,-4/-4/v3,0/4/v6,4/-4/v7,8/4/v10}{
 \node[vertex] (\z) at (\y,\x){};}

 \path[edge] (v1) -- (v3);
\path[edge] (v3) -- (v5);
\draw [line width=2pt, line cap=rectengle, dash pattern=on 1pt off 1]  (v1) -- (v3);
\draw [line width=3pt, line cap=round, dash pattern=on 0pt off 2\pgflinewidth]  (v3) -- (v5);
\path[edge] (v5) -- (v7);
\path[edge, dotted] (v7) -- (v9);

 \path[edge] (v2) -- (v4);
\path[edge] (v4) -- (v6);
\draw [line width=2pt, line cap=rectengle, dash pattern=on 1pt off 1]  (v2) -- (v4);
\draw [line width=3pt, line cap=round, dash pattern=on 0pt off 2\pgflinewidth]  (v4) -- (v6);
\path[edge] (v6) -- (v8);
\path[edge, dotted] (v8) -- (v10);

 \path[edge] (v9) -- (v10);
\path[edge] (v1) -- (v2);
\draw [line width=2pt, line cap=rectengle, dash pattern=on 1pt off 1]  (v9) -- (v10);
\draw [line width=3pt, line cap=round, dash pattern=on 0pt off 2\pgflinewidth]  (v1) -- (v2);
\path[edge] (v3) -- (v4);
\path[edge, dotted] (v5) -- (v6);
\path[edge, dashed] (v7) -- (v8);

\draw[edge] plot [smooth,tension=1.5] coordinates{(v1)(0,-9)(v2)};
\draw[edge, dotted] plot [smooth,tension=1.5] coordinates{(v1)(0,-7)(v2)};
\draw[edge,dotted] plot [smooth,tension=1.5] coordinates{(v3)(0,-5)(v4)};
\draw[edge, dashed] plot [smooth,tension=1.5] coordinates{(v3)(0,-3)(v4)};
\draw[edge,dashed] plot [smooth,tension=1.5] coordinates{(v5)(0,-1)(v6)};
\draw[edge] plot [smooth,tension=1.5] coordinates{(v5)(0,1)(v6)};
\draw[line width=2pt, line cap=rectengle, dash pattern=on 1pt off 1] plot [smooth,tension=1.5] coordinates{(v5)(0,1)(v6)};
\draw[edge] plot [smooth,tension=1.5] coordinates{(v7)(0,3)(v8)};
\draw[line width=2pt, line cap=rectengle, dash pattern=on 1pt off 1] plot [smooth,tension=1.5] coordinates{(v7)(0,3)(v8)};
\draw[edge] plot [smooth,tension=1.5] coordinates{(v7)(0,5)(v8)};
\draw[line width=3pt, line cap=round, dash pattern=on 0pt off 2\pgflinewidth] plot [smooth,tension=1.5] coordinates{(v7)(0,5)(v8)};
\draw[edge] plot [smooth,tension=1.5] coordinates{(v9)(0,7)(v10)};
\draw[line width=3pt, line cap=round, dash pattern=on 0pt off 2\pgflinewidth] plot [smooth,tension=1.5] coordinates{(v9)(0,7)(v10)};
\draw[edge] plot [smooth,tension=1.5] coordinates{(v9)(0,9)(v10)};
\draw[edge, dashed] plot [smooth,tension=1.5] coordinates{(v1)(-5,0)(v9)};
\draw[edge, dashed] plot [smooth,tension=1.5] coordinates{(v2)(5,0)(v10)};
 \end{scope}

 \begin{scope}[shift={(0,-1)}]
\node[ver] (308) at (-3,5){$0$};
\node[ver] (300) at (-3,3){$1$};
\node[ver] (301) at (-3,1){$2$};
\node[ver] (302) at (-3,-1){$3$};
\node[ver] (303) at (-3,-3){$4$};
\node[ver] (309) at (3,5){};
\node[ver] (304) at (3,3){};
\node[ver] (305) at (3,1){};
\node[ver] (306) at (3,-1){};
\node[ver] (307) at (3,-3){};
\path[edge] (300) -- (304);
\path[edge] (308) -- (309);
\draw [line width=2pt, line cap=rectengle, dash pattern=on 1pt off 1]  (308) -- (309);
\draw [line width=3pt, line cap=round, dash pattern=on 0pt off 2\pgflinewidth]  (300) -- (304);
\path[edge] (301) -- (305);
\path[edge, dotted] (302) -- (306);
\path[edge, dashed] (303) -- (307);
\end{scope}
 \end{tikzpicture}
\caption{Semi-simple crystallizations of $\mathbb S^1 \times \mathbb S^3$ and $\TPSSS$}\label{fig:S2S1}\end{figure}
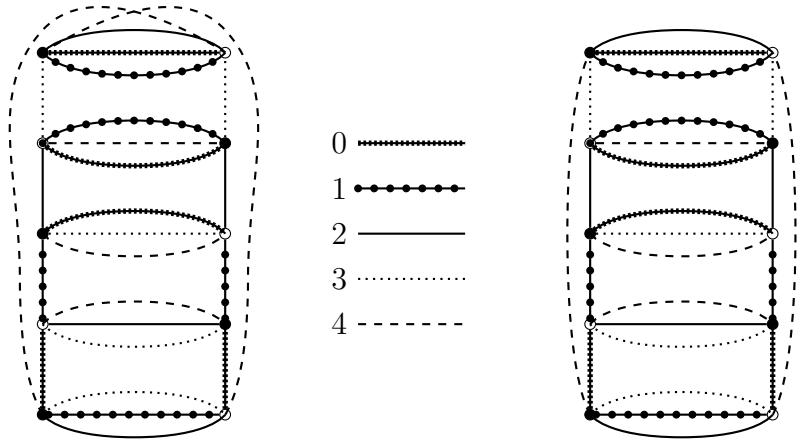

\begin{proposition} \label{prop:gem}
Let $M$ be a PL $4$-manifold with gem-complexity less than nine. Then, $M$ admits semi-simple crystallizations.
\end{proposition}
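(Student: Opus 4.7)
The plan is to combine Remark \ref{rem:characterization} with the classification of low-gem-complexity PL 4-manifolds in \cite{Casali14Cataloguing}. By Remark \ref{rem:characterization}, $M$ admits a semi-simple crystallization if and only if $k(M) = 3\chi(M) + 10m - 6$; equivalently (via equation \eqref{ch6:eq1} and Proposition \ref{preliminaries}(b)), a minimum-order crystallization of $M$ is semi-simple if and only if $g_{ijk} = m + 1$ for every triple $\{i,j,k\} \subset \Delta_4$. It therefore suffices to show that the bound of Theorem \ref{theorem 0} is attained by every PL 4-manifold with $k(M) \leq 8$.

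First I would appeal to \cite{Casali14Cataloguing}, which identifies up to PL-homeomorphism all PL 4-manifolds admitting a crystallization of order at most $18$. The resulting list consists of $\mathbb{S}^4$ together with connected sums of the ``standard'' pieces: $\mathbb{CP}^2$ with both orientations, $\mathbb{S}^2 \times \mathbb{S}^2$, $\mathbb{RP}^4$, and the two $\mathbb{S}^3$-bundles over $\mathbb{S}^1$. Proposition \ref{prop:connected_sum} reduces the problem to checking the semi-simple property on each of these irreducible building blocks. For the simply-connected pieces $\mathbb{S}^4$, $\mathbb{CP}^2$, and $\mathbb{S}^2 \times \mathbb{S}^2$, simple crystallizations (hence semi-simple of type $0$) are already exhibited in \cite{bs14, Casali14GemCompl} with the expected minimum order. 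For the orientable and non-orientable $\mathbb{S}^3$-bundles over $\mathbb{S}^1$, Figure \ref{fig:S2S1} displays explicit $10$-vertex crystallizations in which each of the ten residues $\Gamma_{\{i,j,k\}}$ has exactly $2$ connected components, so they are semi-simple of type $1$ and realize $k = 4 = 3 \cdot 0 + 10 \cdot 1 - 6$. For $\mathbb{RP}^4$, the analogous combinatorial identity $g_{ijk} = 2$ must be verified on the $16$-vertex crystallization supplied by the catalogue, confirming $k(\mathbb{RP}^4) = 7 = 3 \cdot 1 + 10 \cdot 1 - 6$.

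The hard part is really the reliance on the catalogue: the argument gives no intrinsic obstruction preventing some ``exotic'' irreducible PL 4-manifold with $k(M) \leq 8$ from escaping the known list, so the completeness of \cite{Casali14Cataloguing} is indispensable. Once it is taken for granted, the remaining steps are a finite number of residue counts on the handful of explicit colored graphs making up the building blocks, followed by an application of Proposition \ref{prop:connected_sum} to lift the semi-simple property from these pieces to all of their connected sums appearing in the catalogue.
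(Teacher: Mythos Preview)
Your proposal is correct and follows essentially the same route as the paper: invoke the classification from \cite{Casali14Cataloguing}, verify directly that each irreducible building block admits a semi-simple crystallization (via \cite{bs14} for the simply-connected ones and via the explicit colored graphs in Figures~\ref{fig:S2S1} and~\ref{fig:RP4} for the $\mathbb S^3$-bundles and $\mathbb{RP}^4$), and then use Proposition~\ref{prop:connected_sum} to handle the connected sums. The only cosmetic difference is that you list $\mathbb S^2\times\mathbb S^2$ among the building blocks, whereas the paper's summary of \cite[Proposition~15]{Casali14Cataloguing} does not; this is harmless since $\mathbb S^2\times\mathbb S^2$ admits a simple crystallization in any case.
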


\begin{proof}
By \cite[Proposition 15]{Casali14Cataloguing}, the  PL $4$-manifolds with gem-complexity less than nine are:
$\mathbb S^4$, $\mathbb{CP}^2$, the orientable $\mathbb S^3$-bundle $\mathbb S^1 \times \mathbb S^3,$  the non-orientable $\mathbb S^3$-bundle $\TPSSS$ and the 4-dimensional real projective space $\mathbb{RP}^4$, together with some suitable connected sums of them.
Both the simply-connected PL $4$-manifolds $\mathbb S^4$ and $\mathbb{CP}^2$ admit simple crystallizations (see \cite{bs14}), i.e. semi-simple crystallizations of type 0.  Moreover, it is easy to check that the (well-known) crystallizations of $\mathbb S^1 \times \mathbb S^3$ and  $\TPSSS$ depicted in Figure \ref{fig:S2S1} are semi-simple crystallizations, as well as the unique order 16 crystallization of $\mathbb{RP}^4$ depicted in Figure \ref{fig:RP4} (see \cite{Casali14Cataloguing} for the uniqueness).
Now the result follows from the additivity of semi-simple crystallizations (Proposition \ref{prop:connected_sum}).
\end{proof}
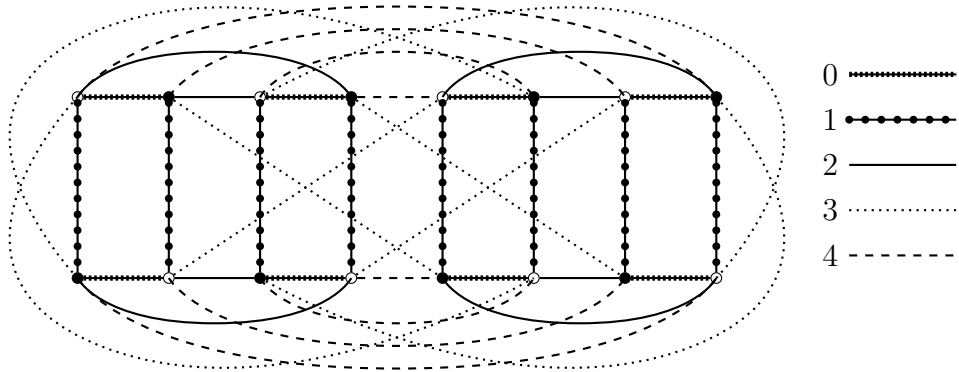
\begin{figure}[ht]
\tikzstyle{vert}=[circle, draw, fill=black!100, inner sep=0pt, minimum width=4pt] \tikzstyle{vertex}=[circle, draw, fill=black!00, inner sep=0pt, minimum width=4pt] \tikzstyle{ver}=[]
\tikzstyle{extra}=[circle, draw, fill=black!50, inner sep=0pt, minimum width=2pt] \tikzstyle{edge} = [draw,thick,-] \tikzstyle{arrow} = [draw,thick,->]
\centering
\begin{tikzpicture}[scale=0.3]
\begin{scope}[]
\foreach \x/\y/\z in {-14/4/v1,-10/-4/v10,-6/4/v3,-2/-4/v12,2/4/v5,6/-4/v14,10/4/v7,14/-4/v16}{
 \node[vertex] (\z) at (\x,\y){};}

\foreach \x/\y/\z in {-14/-4/v9,-10/4/v2,-6/-4/v11,-2/4/v4,2/-4/v13,6/4/v6,10/-4/v15,14/4/v8}{
 \node[vert] (\z) at (\x,\y){};}

\foreach \x/\y in {v1/v2,v3/v4,v5/v6,v7/v8,v9/v10,v11/v12,v13/v14,v15/v16}{
\path[edge] (\x) -- (\y);
\draw [line width=2pt, line cap=rectengle, dash pattern=on 1pt off 1]  (\x) -- (\y);}

\foreach \x/\y in {v1/v9,v2/v10,v3/v11,v4/v12,v5/v13,v6/v14,v7/v15,v8/v16}{
\path[edge] (\x) -- (\y);
\draw [line width=3pt, line cap=round, dash pattern=on 0pt off 2\pgflinewidth]  (\x) -- (\y);}

\foreach \x/\y in {v2/v3,v6/v7,v10/v11,v14/v15}{
\path[edge] (\x) -- (\y);}
\draw[edge] plot [smooth,tension=1.5] coordinates{(v1)(-8,6)(v4)};
\draw[edge] plot [smooth,tension=1.5] coordinates{(v9)(-8,-6)(v12)};
\draw[edge] plot [smooth,tension=1.5] coordinates{(v5)(8,6)(v8)};
\draw[edge] plot [smooth,tension=1.5] coordinates{(v13)(8,-6)(v16)};

\foreach \x/\y in {v4/v5,v12/v13}{
\path[edge, dashed] (\x) -- (\y);}

\draw[edge, dashed] plot [smooth,tension=1.5] coordinates{(v3)(0,6)(v6)};
\draw[edge, dashed] plot [smooth,tension=1.5] coordinates{(v2)(0,7)(v7)};
\draw[edge, dashed] plot [smooth,tension=1.5] coordinates{(v1)(0,8)(v8)};
\draw[edge, dashed] plot [smooth,tension=1.5] coordinates{(v11)(0,-6)(v14)};
\draw[edge, dashed] plot [smooth,tension=1.5] coordinates{(v10)(0,-7)(v15)};
\draw[edge, dashed] plot [smooth,tension=1.5] coordinates{(v9)(0,-8)(v16)};

\foreach \x/\y in {v4/v15,v12/v7,v2/v13,v10/v5}{
\path[edge, dotted] (\x) -- (\y);}
\draw[edge, dotted] plot [smooth,tension=1.5] coordinates{(v1)(-13,-7)(v14)};
\draw[edge, dotted] plot [smooth,tension=1.5] coordinates{(v9)(-13,7)(v6)};
\draw[edge, dotted] plot [smooth,tension=1.5] coordinates{(v8)(13,-7)(v11)};
\draw[edge, dotted] plot [smooth,tension=1.5] coordinates{(v16)(13,7)(v3)};
 \end{scope}

 \begin{scope}[shift={(22,0)}]
\node[ver] (308) at (-3,5){$0$};
\node[ver] (300) at (-3,3){$1$};
\node[ver] (301) at (-3,1){$2$};
\node[ver] (302) at (-3,-1){$3$};
\node[ver] (303) at (-3,-3){$4$};
\node[ver] (309) at (3,5){};
\node[ver] (304) at (3,3){};
\node[ver] (305) at (3,1){};
\node[ver] (306) at (3,-1){};
\node[ver] (307) at (3,-3){};
\path[edge] (300) -- (304);
\path[edge] (308) -- (309);
\draw [line width=2pt, line cap=rectengle, dash pattern=on 1pt off 1]  (308) -- (309);
\draw [line width=3pt, line cap=round, dash pattern=on 0pt off 2\pgflinewidth]  (300) -- (304);
\path[edge] (301) -- (305);
\path[edge, dotted] (302) -- (306);
\path[edge, dashed] (303) -- (307);
\end{scope}

\end{tikzpicture}
 \caption{A semi-simple crystallization of $\mathbb{RP}^4$} \label{fig:RP4}
 \end{figure}

\begin{remark}\label{rem:huge-class}
{\rm Note that (semi-)simple crystallizations of all simply-connected PL $4$-manifolds of ``standard type'' are known (see \cite{bs14}, where simple crystallizations of  $\mathbb S^4$, $\mathbb{CP}^{2}$, $\mathbb{S}^{2} \times \mathbb{S}^{2}$ and the $K3$-surface are presented); moreover, Proposition \ref{prop:gem} yields semi-simple crystallizations of the non-simply-connected PL $4$-manifolds $\mathbb S^1 \times \mathbb S^3$, $\TPSSS$ and $\mathbb{RP}^4$. Thus, additivity of semi-simple crystallizations (Proposition \ref{prop:connected_sum}) gives a huge class of PL $4$-manifolds which admit semi-simple crystallizations.}
\end{remark}

\medskip

The following proposition proves that semi-simple crystallizations are ``minimal'' both with respect to the invariant gem-complexity and with respect to the invariant regular genus.  Moreover,  a lot of details about their combinatorial structure are obtained.

\begin{proposition} \label{theorem 1bis}
Let $(\Gamma,\gamma)$ be a semi-simple crystallization of type $m$. If $M$ denotes the PL $4$-manifold (with $rk(\pi_1(M^4))=m$) represented by $\Gamma,$ then:
$$ \begin{array}{lll}
\mathit{k}(M) &=& 3 \chi (M) + 10m -6;\\
\mathcal G(M)&=& 2 \chi (M) + 5m -4;\\
\mathit{k}(M) &=& \frac{3 \mathcal G(M)  +5m} 2.
\end{array}
$$
\noindent
Moreover:
\begin{enumerate}[(i)]
\item $\rho_\varepsilon (\Gamma) =  \mathcal G(M) = 2 \chi (M) + 5m -4$ \ \ for any cyclic permutation $\varepsilon$ of $\Delta_4,$
\item $\# V(\Gamma) =  2 (\mathit{k}(M) +1)  =  6 \chi (M) + 20m -10, $
\item $g_{ij} = \chi (M) + 4m -1$   \ \ for any pair $i,j \in \Delta_4,$
\item $\rho_{\varepsilon} (\Gamma_{\Delta_4 \setminus \{i\}})  =  \frac{\mathcal G(M) -m}2  =  \chi (M) + 2m -2$ \ \ for any cyclic permutation $\varepsilon$ of $\Delta_4$ and for any color $i\in \Delta_4.$
\end{enumerate}
\end{proposition}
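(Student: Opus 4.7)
The plan is to bootstrap from Theorem \ref{theorem 0} and from the linear-algebraic setup already assembled in its proof, so that the three displayed equalities and items (i)--(iv) reduce to specializing formulas to the semi-simple case $g_{ijk}=m+1$. To begin, I would read off $\mathit{k}(M)=3\chi(M)+10m-6$ directly from Remark \ref{rem:characterization}: the definition of a semi-simple crystallization immediately forces $f_1(\mathcal{K}(\Gamma))=10(m+1)$, and equation \eqref{ch6:eq1} then gives $\#V(\Gamma)=6\chi(M)+20m-10$; this matches the lower bound of Theorem \ref{theorem 0}, so $(\Gamma,\gamma)$ realizes gem-complexity. This simultaneously proves the first displayed equality and claim (ii).

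Next, to establish (iii), I would revisit the system $AX=B$ used in the proof of Theorem \ref{theorem 0}. Under the semi-simple hypothesis each entry of $B$ collapses to the constant $C=2(m+1)+p$ (with $2p=\#V(\Gamma)$), so $X=A^{-1}B$ is proportional to $A^{-1}\mathbf{1}$. I would then observe that every row of $A$ contains exactly three $1$'s, whence $A\mathbf{1}=3\,\mathbf{1}$ and $A^{-1}\mathbf{1}=\tfrac13\mathbf{1}$; this forces $g_{ij}=C/3$ uniformly, which on simplification is $\chi(M)+4m-1$. Alternatively one could invoke the $S_5$-symmetry of the system together with uniqueness of the solution to conclude that all $g_{ij}$ agree, but the row-sum observation is shorter.

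Claim (i) then follows by substituting (iii) into the formula \eqref{relation_chi} for $\chi_\varepsilon(\Gamma)$: since all $g_{ij}$ coincide, $\chi_\varepsilon(\Gamma)$ loses its dependence on $\varepsilon$, and a short computation yields $\rho_\varepsilon(\Gamma)=2\chi(M)+5m-4$ for every cyclic permutation. Comparing with the lower bound of Theorem \ref{theorem 0} shows that $(\Gamma,\gamma)$ also realizes the regular genus, giving the second main equality. For (iv), I would plug $g_{(i-1)(i+1)}=\chi(M)+4m-1$ and $g_{(i-1)i(i+1)}=m+1$ into the identity $g_{(i-1)(i+1)}=g_{(i-1)i(i+1)}+\rho-\rho_{\hat i}$ recalled at the end of Subsection \ref{sec:genus}, and solve for $\rho_{\hat i}$. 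The last displayed equality $\mathit{k}(M)=(3\mathcal{G}(M)+5m)/2$ is then a one-line algebraic consequence of the first two.

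The only step I anticipate as non-routine is the constancy of the $g_{ij}$'s; this is the main obstacle, mostly because one must justify the row-sum property of $A^{-1}$ (or invoke symmetry plus invertibility) rather than merely copying a formula. Everything else is substitution into identities already at hand in Section \ref{sec:prelim} and in the proof of Theorem \ref{theorem 0}.
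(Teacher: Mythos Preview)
Your proposal is correct and follows essentially the same path as the paper: specialize the linear-algebraic setup from the proof of Theorem \ref{theorem 0} (here $q=0$, all $t_{ijk}=0$) to read off $\#V(\Gamma)$, the $g_{ij}$'s, and $\rho_\varepsilon(\Gamma)$, then compare with the lower bounds. The only cosmetic differences are that the paper obtains the constancy of the $g_{ij}$'s by setting $q=t_{ljk}=0$ in the explicit formula for $g_{\varepsilon_i\varepsilon_{i+1}}$ rather than via your row-sum observation $A\mathbf{1}=3\mathbf{1}$, and it derives (iv) from the second identity $g_{(i-1)(i+1)(i+2)}=1+\rho-\rho_{\hat i}-\rho_{\widehat{i+3}}$ (yielding $\rho_{\hat i}+\rho_{\hat j}=\rho-m$ for all $i,j$) instead of the first one you use; both choices work.
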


\begin{proof}
Let $(\Gamma,\gamma)$ be a crystallization of $M$, with $\#V(\Gamma)=2p$. From the proof of Theorem \ref{theorem 0}, we have $2p=6 \chi(M)+ 2f_1(\mathcal{K}(\Gamma))-30$. Thus, if $(\Gamma,\gamma)$ is semi-simple, $f_1(\mathcal{K}(\Gamma))=m+1$ and hence $2p= 6 \chi(M)+10(2m-1)$. This proves $\mathit{k}(M) \ = \frac{\#V(\Gamma)}2 -1 \ = \ 3 \chi (M) + 10m -6.$

On the other hand, from Theorem \ref{theorem 0}, we get
$$\mathcal G(M) \geq 2\chi(M) + 5m - 4 = \frac{(\#V(\Gamma)-2)-5m}{3} = \frac{2\mathit{k}(M)-5m}{3}.$$
Now, let $\bar p, q, t_{ijk}$ be as in the proof of Theorem \ref{theorem 0}. In this case $q=0$, $t_{ijk}=0$ for any $i,j,k$ and $\#V(\Gamma)=2 \bar p$; hence
$g_{\varepsilon_i\varepsilon_{i+1}}=\frac{2(m+1)+ \bar p}{3}$ for any cyclic permutation $\varepsilon = (\varepsilon_0,\varepsilon_1, \dots, \varepsilon_4)$.
Therefore,
$\chi_{\varepsilon}(\Gamma)=\sum_{i \in \mathbb{Z}_5}g_{\varepsilon_i\varepsilon_{i+1}}-3 \bar p=\frac{10(m+1)-4 \bar p}{3}$. This implies $\rho_{\varepsilon}(\Gamma) = 1 - \chi_{\varepsilon}(\Gamma)/2 =
\frac{2( \bar p-1)-5m}{3}=\frac{2\mathit{k}(M)-5m}{3}$. Therefore, $\mathcal G(M)=\frac{2\mathit{k}(M)-5m}{3}$. This proves both relation $ \mathcal G(M) \ = \ 2 \chi (M) + 5m -4$ and relation $\mathit{k}(M)= \frac{3 \mathcal G(M)  +5m} 2.$

\smallskip

As pointed out in the proof of Theorem \ref{theorem 0},
$$g_{\varepsilon_i\varepsilon_{i+1}}=\frac{2(m+1)+ \bar p+q}{3} + 2\sum_{0 \leq l <j<k\leq 4}c_{ljk}^{\varepsilon_i\varepsilon_{i+1}}t_{ljk}$$

\noindent where $c_{ljk}^{rs}$ is the element of $A^{-1}$ corresponding to
$\{r,s\}$-row and $\{l,j,k\}$-column of $A^{-1}$. Since in this case both $q$ and all $t_{ljk}$'s are zero, we have $g_{\varepsilon_i\varepsilon_{i+1}}=\frac{2(m+1)+ \bar p}{3}$. Since the same argument holds for any cyclic permutation $\varepsilon$ of $\Delta_4$, relation  $g_{ij} \ = \ \chi (M) + 4m -1$ is proved to be true for any pair $i,j \in \Delta_4.$

Again, from Subsection \ref{sec:genus} we know that, if $\rho$ denotes $\rho_{\varepsilon}(\Gamma)$ and $\rho_{\hat i}$ denotes $\rho_{\varepsilon} (\Gamma_{\Delta_4 \setminus \{i\}})$ then $g_{(i-1)(i+1)}=g_{(i-1)(i)(i+1)} + \rho - \rho_{\hat i}$  and  $ g_{(i-1)(i+1)(i+2)}= 1 + \rho - \rho_{\hat i} - \rho_{\hat {i+3}}$  for any $i \in \mathbb Z_5$.
In the case of a semi-simple crystallization of type $m$, $\rho - \rho_{\hat i} - \rho_{\hat j}=m$ for any pair $i,j \in \Delta_4$. As a consequence,  $\rho_{\hat i}= \frac {\rho - m} 2$ holds for any $i \in \Delta_4$; the proof is completed, since $\rho_{\hat i} = \frac {(2 \chi (M) + 5m -4) - m} 2 = \chi (M) + 2m -2$ directly follows.
\end{proof}

\bigskip

Theorem \ref{theorem 1} is now a direct consequence of Proposition \ref{theorem 1bis}.

\smallskip
\noindent {\em Proof of Theorem} \ref{theorem 1}.
It is sufficient to consider an arbitrary  semi-simple crystallization of $M$, and to apply  Proposition \ref{theorem 1bis}.
\hfill $\Box$

\bigskip

\begin{remark} {\rm If $M$ admits simple crystallizations then, by Theorem \ref{theorem 1}:
$$ \mathit{k}(M) \ = \ 3 \chi (M) -6 = 3 (2 +\beta_2(M)) -6= 3 \beta_2(M)$$
\par \noindent
 and
$$ \mathcal G(M) \ = \ 2 \chi (M) -4 = 2 (2 +\beta_2(M)) -4= 2 \beta_2(M),$$
as already proved in \cite[Theorem 1]{Casali14GemCompl}.}
\end{remark}

\begin{corollary} \label{corollary_free}
Let $M$ be an orientable PL $4$-manifold with $\pi_1(M) = \ast_m \mathbb Z.$ If $M$ admits semi-simple crystallizations, then:
$$ \mathit{k}(M) \ = \ 3 \beta_2 (M) + 4m,$$
$$ \mathcal G(M) \ = \ 2 \beta_2 (M) + m.$$
Moreover, for any semi-simple crystallization $\Gamma$ of $M$:
\begin{enumerate}[(i)]
\item $\rho_\varepsilon (\Gamma) \ = \ 2 \beta_2 (M) + m$ \ \ for any cyclic permutation $\varepsilon$ of $\Delta_4;$
\item $\# V(\Gamma) \ = \ 2 (\mathit{k}(M) +1) \ = \ 6 \beta_2 (M) + 8m +2;$
\item $g_{ij} \ = \ \beta_2 (M) + 2m +1$   \ \ for any pair $i,j \in \Delta_4;$
\item $\rho_{\varepsilon} (\Gamma_{\Delta_4 \setminus \{i\}}) \ = \ \beta_2 (M)$ \ \ for any cyclic permutation $\varepsilon$ of $\Delta_4$ and for any color $i\in \Delta_4.$
\end{enumerate}
\end{corollary}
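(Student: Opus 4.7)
\medskip

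\noindent \emph{Proof proposal for Corollary \ref{corollary_free}.}
The plan is to deduce the corollary from Proposition \ref{theorem 1bis} by rewriting every quantity involving $\chi(M)$ in terms of $\beta_2(M)$ and $m$. The only topological input needed, beyond what Proposition \ref{theorem 1bis} already provides, is the computation of $\chi(M)$ for the class of manifolds under consideration.

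First, I would establish the identity
\[
\chi(M) \ = \ 2 - 2m + \beta_2(M).
\]
Since $\pi_1(M) = \ast_m \mathbb{Z}$, its abelianization is $\mathbb{Z}^m$, which is torsion-free; hence $H_1(M;\mathbb{Z}) \cong \mathbb{Z}^m$ and $\beta_1(M) = m$. Because $M$ is closed, connected and orientable, Poincar\'e duality yields $\beta_3(M) = \beta_1(M) = m$ and $\beta_0(M) = \beta_4(M) = 1$. Summing with alternating signs gives the asserted value of $\chi(M)$. Note also that $rk(\pi_1(M)) = m$, so the hypothesis of Proposition \ref{theorem 1bis} (with parameter $m$) is satisfied.

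Next, I would substitute this expression for $\chi(M)$ into each of the five identities listed in Proposition \ref{theorem 1bis}, and simplify:
\begin{itemize}
\item $k(M) = 3\chi(M) + 10m - 6 = 3\beta_2(M) + 4m$;
\item $\mathcal G(M) = 2\chi(M) + 5m - 4 = 2\beta_2(M) + m$;
\item item (i): $\rho_\varepsilon(\Gamma) = \mathcal G(M) = 2\beta_2(M) + m$;
\item item (ii): $\#V(\Gamma) = 6\chi(M) + 20m - 10 = 6\beta_2(M) + 8m + 2$;
\item item (iii): $g_{ij} = \chi(M) + 4m - 1 = \beta_2(M) + 2m + 1$;
\item item (iv): $\rho_\varepsilon(\Gamma_{\Delta_4\setminus\{i\}}) = \chi(M) + 2m - 2 = \beta_2(M)$.
\end{itemize}
Each of these is a one-line algebraic check once the formula for $\chi(M)$ has been inserted.

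Honestly, there is no real obstacle: the entire content of the corollary is already packaged inside Proposition \ref{theorem 1bis}, and the only piece of genuine input is the Betti-number computation, which in turn rests on the fact that the abelianization of a free group of rank $m$ is $\mathbb{Z}^m$ (so no torsion issues in $H_1$) together with Poincar\'e duality in the orientable case. The hypothesis of orientability is used precisely at this step; without it, $\beta_3 = \beta_1$ could fail and the clean formula for $\chi(M)$ would break.
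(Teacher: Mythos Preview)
Your proposal is correct and follows exactly the same approach as the paper: observe that orientability together with $\pi_1(M)=\ast_m\mathbb{Z}$ forces $\chi(M)=2-2m+\beta_2(M)$, then substitute this into each formula of Proposition~\ref{theorem 1bis}. The paper's proof states the Euler-characteristic identity without the Poincar\'e-duality justification you spelled out, but otherwise the arguments are identical.
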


\begin{proof}
Since $M$ is assumed to be orientable with free fundamental group of rank $m$, \ $\chi (M) = 2 - 2 m + \beta_2(M)$ holds. Hence, all statements follow from the analogous ones in Proposition \ref{theorem 1bis}.
\end{proof}

\medskip

The following proposition gives a generalization of Proposition \ref{gap_genus-rank}(b), within the class of PL 4-manifolds admitting semi-simple crystallizations.

\begin{proposition} \label{classif_PL_genere5}
No PL $4$-manifold $M$ with odd difference $\mathcal G(M) - rk(\pi_1(M))$ admits semi-simple crystallizations. In particular, no simply-connected PL $4$-manifold $M$ with odd regular genus admits simple crystallizations.\end{proposition}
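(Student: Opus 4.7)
\textbf{Proof proposal for Proposition \ref{classif_PL_genere5}.} My plan is to reduce the statement to a one-line parity observation by invoking the explicit equality for $\mathcal G(M)$ already proved in Proposition \ref{theorem 1bis}. I will argue by contraposition: assume $M$ admits a semi-simple crystallization and show that $\mathcal G(M) - rk(\pi_1(M))$ must then be even.

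First I would recall that, by definition, a semi-simple crystallization of a PL $4$-manifold $M$ has type exactly $m = rk(\pi_1(M))$. Then I would apply Proposition \ref{theorem 1bis}, which gives the sharp equality
\[
\mathcal G(M) \;=\; 2\chi(M) + 5m - 4.
\]
From this I immediately read off
\[
\mathcal G(M) - m \;=\; 2\chi(M) + 4m - 4 \;=\; 2\bigl(\chi(M) + 2m - 2\bigr),
\]
which is even because $\chi(M)$ and $m$ are integers. Taking the contrapositive, whenever $\mathcal G(M) - rk(\pi_1(M))$ is odd, $M$ cannot admit any semi-simple crystallization. The simply-connected specialization is then automatic: if $\pi_1(M)$ is trivial, then $m=0$, simple crystallizations coincide with semi-simple crystallizations of type $0$ by Definition~3, and the parity of $\mathcal G(M) - rk(\pi_1(M))$ reduces to the parity of $\mathcal G(M)$ itself.

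I do not expect any real obstacle: the whole content has been absorbed into Proposition \ref{theorem 1bis}, and what remains is purely arithmetic. As a sanity check one could also re-derive the evenness from item (iv) of Proposition \ref{theorem 1bis}, namely $\rho_\varepsilon(\Gamma_{\Delta_4\setminus\{i\}}) = (\mathcal G(M)-m)/2$, combined with the integrality of the differences $\rho - \rho_{\hat i}$ coming from the relation $g_{(i-1)(i+1)} = g_{(i-1)(i)(i+1)} + \rho - \rho_{\hat i}$ recalled in Subsection \ref{sec:genus}; but the direct substitution into the formula for $\mathcal G(M)$ is shorter and more transparent, so that is the route I would take in the final write-up.
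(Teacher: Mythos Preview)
Your proof is correct and essentially the same as the paper's: both reduce the claim to Proposition \ref{theorem 1bis} and a parity observation. The only cosmetic difference is that the paper cites item (iv), namely $\rho_{\varepsilon}(\Gamma_{\Delta_4\setminus\{i\}}) = (\mathcal G(M)-m)/2 = \chi(M)+2m-2$, whereas you substitute directly into $\mathcal G(M) = 2\chi(M)+5m-4$; you even mention this alternative yourself, and the two computations yield the identical factorisation $\mathcal G(M)-m = 2(\chi(M)+2m-2)$.
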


\begin{proof}
It is sufficient to recall that, by Proposition \ref{theorem 1bis}, $\rho_{\varepsilon} (\Gamma_{\Delta_4\setminus \{i\}}) = \frac{\mathcal G(M) -m}2$ \ holds for any semi-simple crystallization $(\Gamma,\gamma)$  of $M$, for any cyclic permutation $\varepsilon$ of $\Delta_4$ and for any color $i\in \Delta_4.$
\end{proof}

Under the assumption of free fundamental group, we have also the following result about the PL classification of orientable PL 4-manifolds admitting semi-simple crystallizations.

\begin{corollary} \label{corollary_beta2}
Let $M$ be an orientable  PL $4$-manifold with $\pi_1(M) = \ast_m \mathbb Z$ and $\beta_2=1$. If $M$ admits semi-simple crystallizations, then $M$ is PL-homeomorphic to  $\mathbb C \mathbb P^2 \#_m (\mathbb S^1 \times \mathbb S^3).$
\end{corollary}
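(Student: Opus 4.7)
The plan is to combine the explicit formula for $\mathcal G(M)$ from Corollary \ref{corollary_free} with the classification result stated in Proposition \ref{gap_genus-rank}(c).

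First, I would compute $\mathcal G(M)$ directly. Since $M$ admits semi-simple crystallizations, is orientable with $\pi_1(M) = \ast_m\mathbb{Z}$ (so $rk(\pi_1(M)) = m$), and satisfies $\beta_2(M) = 1$, Corollary \ref{corollary_free} gives
\[
\mathcal G(M) \;=\; 2\beta_2(M) + m \;=\; m + 2.
\]
Hence the gap between the regular genus and the rank of the fundamental group equals exactly $2$, i.e.\ $\mathcal G(M) = rk(\pi_1(M)) + 2$.

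Second, I would invoke Proposition \ref{gap_genus-rank}(c), which under the hypotheses $\mathcal G(M) = rk(\pi_1(M)) + 2$ and $\pi_1(M) = \ast_m\mathbb{Z}$ classifies $M$ up to PL-homeomorphism as $\mathbb{CP}^2 \#_m (\mathbb S^1 \otimes \mathbb S^3)$. Since $M$ is assumed orientable, the bundle $\mathbb S^1 \otimes \mathbb S^3$ appearing in the statement is the trivial (orientable) one, namely $\mathbb S^1 \times \mathbb S^3$, and the conclusion $M \cong_{PL} \mathbb{CP}^2 \#_m (\mathbb S^1 \times \mathbb S^3)$ follows.

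There is essentially no obstacle here: the corollary is a direct specialization of the previous results, and the only thing to check carefully is that the numerical formula from Corollary \ref{corollary_free} for $\mathcal G(M)$ matches the hypothesis of Proposition \ref{gap_genus-rank}(c) with $\beta_2(M)=1$, which it does. Thus the argument reduces to a two-line deduction once the earlier machinery is in place.
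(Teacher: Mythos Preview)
Your proof is correct and follows essentially the same approach as the paper: apply Corollary \ref{corollary_free} with $\beta_2(M)=1$ to get $\mathcal G(M) - rk(\pi_1(M)) = 2$, then invoke Proposition \ref{gap_genus-rank}(c). Your additional remark that orientability forces $\mathbb S^1 \otimes \mathbb S^3 = \mathbb S^1 \times \mathbb S^3$ makes explicit a step the paper leaves implicit.
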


\begin{proof}
In virtue of Corollary \ref{corollary_free}, together with the assumption $\beta_2(M)=1$,  we have  $\mathcal G(M) - rk(\pi_1(M))= 2.$  Hence, the corollary directly follows from Proposition \ref{gap_genus-rank}(c).
\end{proof}

\begin{remark} {\rm As a consequence of the relation between regular genus and gem-complexity for PL 4-manifolds admitting semi-simple crystallizations, it is possible to yield new results about the PL classification via regular genus, within that class of PL 4-manifolds.
For example, if $M$ admits semi-simple crystallizations, and $\mathcal G(M)  = 3,$  $m=1$ hold (resp. $\mathcal G(M)  =  4,$  $m=0$ hold),  then $M$ turns out to be PL-homeomorphic to either $\mathbb C \mathbb P^2 \# (\mathbb S^1 \otimes \mathbb S^3)$ or $\mathbb R \mathbb P^4$   (resp. to either $\mathbb C \mathbb P^2 \# \mathbb C \mathbb P^2$ or $\mathbb S^2 \times \mathbb S^2$ or $\mathbb C \mathbb P^2 \# (- \mathbb C \mathbb P^2)$).
In fact, by Theorem \ref{theorem 1}, $\mathit{k}(M)= \frac{3 \mathcal G(M)  +5m} 2$ holds for any PL 4-manifold admitting semi-simple crystallizations; so, the assumption $\mathcal G(M)  = 3,$  $m=1$  (resp. $\mathcal G(M)  =  4,$  $m=0$)  implies $\mathit{k}(M)=7$ (resp. $\mathit{k}(M)=6$). Hence, the PL-classification of the involved PL 4-manifolds follows from \cite[Proposition 15]{Casali14Cataloguing}.  }
\end{remark}

\bigskip

We conclude the paragraph by deducing the additivity of both regular genus and gem-complexity under connected sum, within the class of PL 4-manifolds admitting semi-simple crystallizations.

\bigskip

\noindent {\em Proof of Theorem} \ref{theorem 2}.
For $1 \leq i \leq 2$, let $(\Gamma_i, \gamma_i)$ be a semi-simple crystallization of the PL $4$-manifold $M_i$ with $rk(\pi_1(M_i))=m_i$. Then, by additivity of semi-simple crystallizations (Proposition \ref{prop:connected_sum}),
$(\Gamma_1 \# \Gamma_2, \gamma_1 \#\gamma_2)$ is a semi-simple crystallization of $M_1 \# M_2$. Since  $rk(\pi_1(M_1 \# M_2))=rk(\pi_1(M_1))+rk(\pi_1(M_2))$ and $\chi(M_1 \# M_2)=\chi(M_1)+\chi(M_2)-2$, Theorem \ref{theorem 2} now follows from Theorem \ref{theorem 1}. \hfill $\Box$

\begin{remark}\label{rematk:add-genus}
{\rm
In \cite[Corollary 6.8]{[GG]}, two classes of closed (not necessarily orientable) $4$-manifolds have been detected, for which additivity of regular genus holds. It has been already pointed out  (see \cite{Casali14GemCompl}) that the first one (characterized by relation $\mathcal G(M)=1 - \frac {\chi(M)} 2 $) consists  of connected sums of $\mathbb S^3$-bundles over $\mathbb S^1,$ while the second one (characterized by relation $\mathcal G(M)= 2 \chi(M) -4$, and consisting of simply-connected PL 4-manifolds, as pointed out in Remark \ref{compare GG}) includes  all PL 4-manifolds admitting simple crystallizations, i.e. semi-simple crystallizations of type zero.
Hence, Theorem \ref{theorem 2} strictly enlarges the set of PL 4-manifolds for which additivity of regular genus is known to hold.}
\end{remark}

\section{Some consequences about regular genus and gem-complexity of product 4-manifolds}\label{sec:consequences}

Theorem \ref{theorem 0} enables to significantly improve some lower bounds for the regular genus of PL 4-manifolds, which have been proved by various authors via different techniques. Meanwhile, similar lower  bounds are obtained also for gem-complexity.

\begin{proposition} \label{M3xS1}
For any $3$-manifold $M$ such that $\pi_1(M)$ is a finitely generated abelian group, we have:
 $$ \mathcal G(M \times \mathbb S^{1}) \ge 5 rk(\pi_1(M)) +1 \ \ \quad and \quad \ \   \mathit{k}(M \times \mathbb S^{1}) \ge 10 rk(\pi_1(M)) +4.$$
In particular,
$$ \mathcal G(L(p,q) \times \mathbb S^{1}) \ge 6  \ \ \quad and \quad \ \   \mathit{k}(L(p,q) \times \mathbb S^{1}) \ge 14.$$
\end{proposition}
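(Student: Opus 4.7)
The plan is to apply Theorem \ref{theorem 0} directly to the PL $4$-manifold $N = M \times \mathbb S^1$, so the whole proof reduces to two bookkeeping computations: one for $\chi(N)$ and one for $rk(\pi_1(N))$.

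First, since $M$ is a closed $3$-manifold, $\chi(M)=0$, and by multiplicativity of the Euler characteristic under products we obtain $\chi(N) = \chi(M)\cdot\chi(\mathbb S^1) = 0$. Next, $\pi_1(N) \cong \pi_1(M) \oplus \mathbb Z$, and the key observation is that the hypothesis \emph{$\pi_1(M)$ finitely generated abelian} makes the rank of this direct sum easy to control: by the structure theorem, $\pi_1(M) \cong \mathbb Z^{r} \oplus \mathbb Z/d_1 \oplus \cdots \oplus \mathbb Z/d_s$ (invariant factor form, all $d_i>1$), so its minimal number of generators equals $r+s$, while $\pi_1(M)\oplus \mathbb Z \cong \mathbb Z^{r+1}\oplus \mathbb Z/d_1 \oplus \cdots \oplus \mathbb Z/d_s$ has minimal number of generators $r+s+1$. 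Hence $rk(\pi_1(N)) = rk(\pi_1(M))+1$.

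With these two ingredients, plugging $m = rk(\pi_1(M)) + 1$ and $\chi(N)=0$ into the inequalities of Theorem \ref{theorem 0} immediately yields
\[
\mathcal G(N) \ \ge\ 2\cdot 0 + 5\bigl(rk(\pi_1(M))+1\bigr) - 4 \ =\ 5\,rk(\pi_1(M))+1,
\]
\[
k(N)\ \ge\ 3\cdot 0 + 10\bigl(rk(\pi_1(M))+1\bigr) - 6 \ =\ 10\,rk(\pi_1(M))+4.
\]
For the particular case $M=L(p,q)$ with $p>1$, one has $\pi_1(L(p,q)) \cong \mathbb Z/p$, which is cyclic and therefore has rank $1$; substituting $rk(\pi_1(M))=1$ in the above bounds gives $\mathcal G(L(p,q)\times \mathbb S^1) \ge 6$ and $k(L(p,q)\times \mathbb S^1) \ge 14$.

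The proof is essentially immediate once Theorem \ref{theorem 0} is in hand; the only genuinely delicate point is the rank identity $rk(\pi_1(M)\oplus \mathbb Z) = rk(\pi_1(M))+1$, and this is precisely where the abelianness assumption is used. (For non-abelian $\pi_1(M)$ the minimal number of generators of $\pi_1(M)*\mathbb Z$ need not grow by exactly one, so the same argument would yield only weaker bounds.) I therefore expect no obstacle beyond spelling out this rank calculation cleanly.
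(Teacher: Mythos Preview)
Your proof is correct and follows essentially the same route as the paper's: compute $\chi(M\times\mathbb S^1)=0$ by multiplicativity, use the structure theorem for finitely generated abelian groups to get $rk(\pi_1(M\times\mathbb S^1))=rk(\pi_1(M))+1$, and plug into Theorem~\ref{theorem 0}. One small slip in your closing parenthetical: $\pi_1(M\times\mathbb S^1)$ is the \emph{direct} product $\pi_1(M)\times\mathbb Z$, not the free product $\pi_1(M)\ast\mathbb Z$; this does not affect the argument, since your point that $rk(G\times\mathbb Z)=rk(G)+1$ can fail for non-abelian $G$ remains valid.
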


\begin{proof}
It is well-known that $ \chi(M)=0$ for any 3-manifold $M$ and $\chi(P \times  Q)=\chi(P) \cdot \chi(Q)$ for any pair $P, Q$ of polyhedra. Moreover, by the fundamental theorem of finitely generated abelian groups,  $ rk(\pi_1(M \times \mathbb S^{1}))=rk(\pi_1(M)) + 1$  holds for any 3-manifold $M$ such that $\pi_1(M)$ is
a finitely generated abelian group.  The statements are now direct consequences of the inequalities proved in  Theorem \ref{theorem 0}.
\end{proof}

\begin{remark} {\rm The statement $ \mathcal G(L(p,q) \times \mathbb S^{1}) \ge 6$ already appears
in \cite{[S]}, by making use of  long  calculations performed in \cite{[CavM]}.
On the other hand, the genus six crystallization of $L(2,1) \times \mathbb S^{1}$ produced in  \cite{[S]} allows to prove the equality $\mathcal G(L(2,1) \times \mathbb S^{1})=6.$ The inequality  $ \mathcal G(L(p,q) \times \mathbb S^{1}) \le 6(p-1)$  is also proved in \cite{[S]}.}
\end{remark}

\begin{proposition} \label{prodotto superfici}
Let $T_g$ (resp. $U_h$) denote  the orientable (resp. non-orientable) surface of genus $g\ge 0$ (resp. $h \ge 1$). Then,
$$ \mathcal G(T_g \times T_r) \ge 8 gr + 2 g + 2 r + 4 \ \ \quad and \quad \ \   \mathit{k}(T_g \times T_r) \ge 12 gr + 8 g + 8 r + 6,$$
$$ \mathcal G(T_g \times U_h) \ge 4 gh + 2g + h + 4 \ \ \quad and \quad \ \   \mathit{k}(T_g \times U_h) \ge 6 gh + 8 g + 4 h + 6,$$
$$ \mathcal G(U_h \times U_k) \ge 2 hk + h + k + 4 \ \ \ \quad and \quad \ \   \mathit{k}(U_h \times U_k) \ge 3 hk + 4 h + 4 k + 6.$$
In particular,
$$ \mathcal G(\mathbb S^2 \times T_g) \ge 2g+4 \ \ \quad and \quad \ \   \mathit{k}(\mathbb S^2 \times T_g) \ge  8 g + 6,$$
$$ \mathcal G(\mathbb S^2 \times U_h) \ge h+4 \ \ \ \quad and \quad \ \   \mathit{k}(\mathbb S^2 \times U_h) \ge  4 h + 6.$$
\end{proposition}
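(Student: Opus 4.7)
The plan is to apply Theorem \ref{theorem 0} directly to each product. The two ingredients needed are the Euler characteristic and the rank of the fundamental group of the relevant product 4-manifold, after which the two displayed inequalities become elementary arithmetic identities.

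First I would use multiplicativity of the Euler characteristic, $\chi(P \times Q) = \chi(P) \cdot \chi(Q)$, together with $\chi(T_g)=2-2g$ and $\chi(U_h)=2-h$, to write
\begin{align*}
\chi(T_g\times T_r)&=(2-2g)(2-2r)=4gr-4g-4r+4,\\
\chi(T_g\times U_h)&=(2-2g)(2-h)=2gh-4g-2h+4,\\
\chi(U_h\times U_k)&=(2-h)(2-k)=hk-2h-2k+4,
\end{align*}
and analogously $\chi(\mathbb S^2\times T_g)=4-4g$ and $\chi(\mathbb S^2\times U_h)=4-2h$.

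Next I would determine $rk(\pi_1(\cdot))$, using that $\pi_1(P\times Q)\cong\pi_1(P)\times\pi_1(Q)$. Since $\pi_1(T_g)^{\mathrm{ab}}\cong\mathbb Z^{2g}$ one has $rk(\pi_1(T_g))=2g$, and since $\pi_1(U_h)^{\mathrm{ab}}\cong\mathbb Z^{h-1}\oplus\mathbb Z/2$, reduction modulo $2$ gives an $\mathbb F_2$-vector space of dimension $h$, which forces $rk(\pi_1(U_h))\ge h$; the standard presentation shows that $h$ generators suffice, so equality holds. The same mod-$2$ argument applied to the abelianization of a product of surface groups gives $rk(\pi_1(T_g\times T_r))=2g+2r$, $rk(\pi_1(T_g\times U_h))=2g+h$, $rk(\pi_1(U_h\times U_k))=h+k$, and analogously $2g$ and $h$ in the two $\mathbb S^2$-cases. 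Substituting each pair $(\chi,m)$ into $\mathcal G(M)\ge 2\chi(M)+5m-4$ and $\mathit k(M)\ge 3\chi(M)+10m-6$ yields the six stated estimates after routine simplification.

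The only potentially delicate step is the computation of $rk$ in the non-orientable case, where one must argue that the standard number of generators is actually optimal (a priori the inequality $rk(\pi_1)\ge rk(H_1)$ gives only a lower bound); this is precisely why one passes to $\mathbb F_2$-coefficients, so that the $\mathbb Z/2$ summand contributes and the count of generators of $\pi_1(U_h)$ (and of products with $U_h$-factors) comes out to $h$ (respectively $2g+h$, $h+k$) rather than $h-1$. Once this is settled, the remainder of the proof is a bookkeeping verification and the particular cases $\mathcal G(\mathbb S^2\times T_g)\ge 2g+4$, etc., follow by setting $g=0$ or $h=0$ as appropriate.
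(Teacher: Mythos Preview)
Your proposal is correct and follows essentially the same route as the paper: compute $\chi$ by multiplicativity, compute $rk(\pi_1)$ of each surface and of their products via abelianization (using $\mathbb Z_2$-Betti numbers in the non-orientable case to pick up the torsion summand), and then substitute into Theorem~\ref{theorem 0}. The paper phrases the rank step as $rk(G\times H)\le rk(G)+rk(H)$ together with equality forced by the first Betti number (over $\mathbb Z$ or $\mathbb Z_2$), which is exactly your argument.
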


\begin{proof}
The following facts are well-known: $ \chi(T_g)=2-2g,$ $ \chi(U_h)=2-h,$ $rk(\pi_1(T_g))= 2g$ and $rk(\pi_1(U_h))= h.$ Moreover, $ \chi(P \times  Q)=\chi(P) \cdot \chi(Q)$ for any pair $P, Q$ of polyhedra.
On the other hand, it is not difficult to prove that $rk(\pi_1(P \times Q))=rk(\pi_1(P)) + rk(\pi_1(Q))$ for any pair $P, Q$ of surfaces: in fact, $rk(G \times H) \leq rk(G) + rk(H)$ holds for any pair $G, H$ of groups, while the equality for the fundamental groups of surfaces is a consequence of the equality regarding the first Betti numbers (with integer coefficients in the orientable case and with $\mathbb Z_2$ coefficients in the non-orientable case).
The statements are now direct consequences of the inequalities proved in Theorem \ref{theorem 0}.
\end{proof}

\begin{remark} {\rm The inequalities concerning regular genus in Proposition \ref{prodotto superfici} strictly improve the similar ones obtained in   \cite[Corollary 6.6]{[GG]}.}
\end{remark}

\medskip

\begin{figure}[ht]
\tikzstyle{vert}=[circle, draw, fill=black!100, inner sep=0pt, minimum width=4pt]
\tikzstyle{vertex}=[circle, draw, fill=black!00, inner sep=0pt, minimum width=4pt]
\tikzstyle{ver}=[]
\tikzstyle{extra}=[circle, draw, fill=black!50, inner sep=0pt, minimum width=2pt]
\tikzstyle{edge} = [draw,thick,-]
\centering

\begin{tikzpicture}[scale=0.7]

\begin{scope}[shift={(4,0)}]
\foreach \x/\y in {0/20,120/1,240/2}{
\node[vertex] (\y) at (\x:2){};
}
\end{scope}

\begin{scope}[shift={(4,0)}]
\foreach \x/\y in {60/7,180/6,300/9}{
\node[vertex] (\y) at (\x:2){ };
}
\end{scope}

\begin{scope}[shift={(-4,0)}]
\foreach \x/\y in {300/8,60/10,180/14}{
\node[vertex] (\y) at (\x:2){ };
}
\end{scope}

\begin{scope}[shift={(-4,0)}]
\foreach \x/\y in {0/18,120/22,240/21}{
\node[vertex] (\y) at (\x:2){ };
}
\end{scope}

\begin{scope}[shift={(0,4)}]
\node[vertex] (0) at (1,-1){};
\node[vertex] (4) at (1,1){};
\node[vertex] (11) at (-1,1){};
\node[vertex] (3) at (-1,-1){};
\end{scope}

\begin{scope}[shift={(4,-6)}]
\node[vertex] (13) at (1,-1){};
\node[vertex] (17) at (1,1){};
\node[vertex] (5) at (-1,1){};
\node[vertex] (16) at (-1,-1){};
\end{scope}

\begin{scope}[shift={(-4,-6)}]
\node[vertex] (23) at (1,-1){};
\node[vertex] (15) at (1,1){};
\node[vertex] (19) at (-1,1){};
\node[vertex] (12) at (-1,-1){};
\end{scope}

\foreach \x/\y in {1/7,20/9,2/6,8/18,10/22,14/21,0/4,11/3,5/17,13/16,12/19,15/23}{
\draw[line width=2pt, line cap=rectengle, dash pattern=on 1pt off 1] (\x) -- (\y);
}
\foreach \x/\y in {7/20,9/2,1/6,18/10,22/14,21/8,4/11,3/0,17/13,5/16,19/15,23/12}{
\draw [line width=3pt, line cap=round, dash pattern=on 0pt off 2\pgflinewidth] 	(\x) -- (\y);}

\foreach \x/\y in {1/7,20/9,2/6,8/18,10/22,14/21,0/4,11/3,5/17,13/16,12/19,15/23}{
\path[edge](\x) -- (\y);}
\foreach \x/\y in {7/20,9/2,1/6,18/10,22/14,21/8,4/11,3/0,17/13,5/16,19/15,23/12}{
\path[edge](\x) -- (\y);}

\foreach \x/\y in {0/1,8/2,4/7,3/10,5/15,6/18,22/11,23/16}{
\path[edge](\x) -- (\y);}
\draw[edge] plot [smooth,tension=1] coordinates{(14)(-4,-3)(4,-3)(20)};
\draw[edge] plot [smooth,tension=1] coordinates{(21)(-3,-2.5)(3,-2.5)(9)};
\draw[edge] plot [smooth,tension=1] coordinates{(12)(-5.5,-6)(19)};
\draw[edge] plot [smooth,tension=1] coordinates{(13)(5.5,-6)(17)};
\draw[edge, dashed] plot [smooth,tension=1] coordinates{(14)(-6,1)(22)};
\draw[edge, dashed] plot [smooth,tension=1] coordinates{(1)(1,-1)(5)};
\draw[edge, dashed] plot [smooth,tension=1] coordinates{(7)(7,-1)(17)};
\draw[edge, dashed] plot [smooth,tension=1] coordinates{(0)(1,0)(2)};
\foreach \x/\y in {4/13,3/9,6/16,8/15,10/12,11/20,18/23,19/21}{
\path[edge, dashed](\x) -- (\y);}

\foreach \x/\y in {4/12,7/19,16/22,17/21}{
\path[edge, dotted](\x) -- (\y);}
\draw[edge, dotted] plot [smooth,tension=1] coordinates{(0)(1.8,2)(1)};
\draw[edge, dotted] plot [smooth,tension=1] coordinates{(2)(2.6,-0.7)(6)};
\draw[edge, dotted] plot [smooth,tension=1] coordinates{(3)(2.5,1)(9)};
\draw[edge, dotted] plot [smooth,tension=1] coordinates{(15)(3,-2.5)(20)};
\draw[edge, dotted] plot [smooth,tension=1] coordinates{(11)(0,0)(23)};
\draw[edge, dotted] plot [smooth,tension=1] coordinates{(8)(-3,-0.5)(18)};
\draw[edge, dotted] plot [smooth,tension=1] coordinates{(5)(0,-2.5)(14)};
\draw[edge, dotted] plot [smooth,tension=1] coordinates{(10)(3,-4)(13)};
\end{tikzpicture}
\caption{A crystallization of $ \mathbb S^2 \times \mathbb{RP}^2$ (with genus five and order 24)}
\label{fig:RP2S2}
\end{figure}
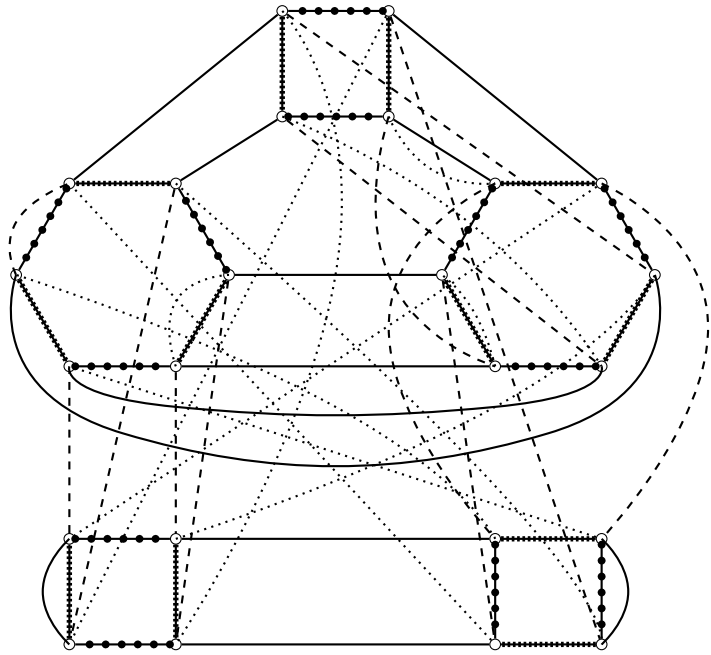

Finally, for $h=1$, the last inequality of Proposition  \ref{prodotto superfici} concerning regular genus (resp. gem-complexity), together with the existence of the genus five (resp. order $24$) crystallization of $\mathbb S^2 \times \mathbb R \mathbb P^2$ depicted in Figure 3, allows the exact calculation of the regular genus (resp. an estimation with ``strict range'' of the gem-complexity) of the involved PL 4-manifold.

\begin{proposition} \label{piano proiettivo per sfera}
$$ \mathcal G(\mathbb S^2 \times \mathbb R \mathbb P^2) =5 \ \ \quad and \quad \ \  \mathit{k}(\mathbb S^2 \times \mathbb R \mathbb P^2) \in \{10,11\}.$$
\end{proposition}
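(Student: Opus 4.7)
The plan is to combine the lower bounds supplied by Proposition \ref{prodotto superfici} with the upper bounds read off from the explicit crystallization displayed in Figure \ref{fig:RP2S2}. Substituting $h=1$ into the third pair of inequalities of Proposition \ref{prodotto superfici} (viewing $\mathbb{RP}^2 = U_1$) immediately yields
$$ \mathcal G(\mathbb S^2 \times \mathbb{RP}^2) \ \ge \ 5 \quad \text{and} \quad \mathit{k}(\mathbb S^2 \times \mathbb{RP}^2) \ \ge \ 10. $$
These bounds rest on the known values $\chi(\mathbb S^2 \times \mathbb{RP}^2) = 2$ and $rk(\pi_1(\mathbb S^2 \times \mathbb{RP}^2)) = 1$, which have already been used in deriving Proposition \ref{prodotto superfici}.

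For the matching upper bounds, I would take $(\Gamma,\gamma)$ to be the order-$24$ $5$-colored graph of Figure \ref{fig:RP2S2} and verify three things: (a) $(\Gamma,\gamma)$ is a contracted crystallization, i.e.\ $\Gamma_{\Delta_4 \setminus \{c\}}$ is connected for every $c \in \Delta_4$; (b) the represented manifold is PL-homeomorphic to $\mathbb S^2 \times \mathbb{RP}^2$; and (c) $\rho(\Gamma) = 5$. For (b), non-orientability follows by exhibiting an odd bicolored cycle (so $\Gamma$ is non-bipartite), the equation \eqref{ch6:eq1} gives $\chi = 2$ from $\#V(\Gamma) = 24$ and the pairwise component counts, and Proposition \ref{preliminaries}(d) together with a presentation count (or an explicit computation of the relators associated to the bicolored $4$-residues) identifies $\pi_1 \cong \mathbb{Z}_2$; a short homology check (e.g.\ $H_2 \cong \mathbb Z \oplus \mathbb Z_2$) rules out the competing non-orientable $4$-manifolds with the same $\chi$ and $\pi_1$. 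For (c), I would tabulate $g_{ij}$ for all $\binom{5}{2}=10$ pairs of colors, compute $\chi_\varepsilon(\Gamma) = \sum_{i \in \mathbb Z_5} g_{\varepsilon_i \varepsilon_{i+1}} - 3 \cdot 12$ via \eqref{relation_chi} for each of the $12$ cyclic permutations $\varepsilon$ of $\Delta_4$, and check that the minimum value of $\rho_\varepsilon(\Gamma) = 1 - \chi_\varepsilon(\Gamma)/2$ equals $5$.

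Granted (a)-(c), Definition \ref{defn:gem-complexity} gives $\mathit{k}(\mathbb S^2 \times \mathbb{RP}^2) \le \frac{24}{2} - 1 = 11$, while $\rho(\Gamma) = 5$ gives $\mathcal G(\mathbb S^2 \times \mathbb{RP}^2) \le 5$. Combining with the earlier lower bounds yields the announced equality $\mathcal G(\mathbb S^2 \times \mathbb{RP}^2) = 5$ and the sandwich $10 \le \mathit{k}(\mathbb S^2 \times \mathbb{RP}^2) \le 11$, i.e.\ $\mathit{k}(\mathbb S^2 \times \mathbb{RP}^2) \in \{10,11\}$. The main obstacle is the bookkeeping in step (c): one must confirm that \emph{no} cyclic permutation of the color set produces $\chi_\varepsilon > -8$, since a single such permutation would already force $\rho(\Gamma) < 5$ and contradict the lower bound. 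A secondary delicate point is the positive identification of the represented manifold in step (b), as several closed non-orientable PL $4$-manifolds share the invariants $\chi = 2$ and $\pi_1 \cong \mathbb{Z}_2$; this can be resolved either by displaying enough of the handle structure dual to $\mathcal{K}(\Gamma)$ to recognize the product structure, or by matching the integral homology of the graph-manifold with that of $\mathbb S^2 \times \mathbb{RP}^2$.
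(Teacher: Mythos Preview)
Your approach is exactly the one the paper uses: the lower bounds come from Proposition \ref{prodotto superfici} (note that $\mathbb S^2 = T_0$, so you want the second pair of inequalities, or the displayed particular case $\mathcal G(\mathbb S^2 \times U_h)\ge h+4$, $\mathit k(\mathbb S^2 \times U_h)\ge 4h+6$, not the third pair $U_h\times U_k$), and the upper bounds come from the order-$24$, genus-$5$ crystallization of Figure \ref{fig:RP2S2}, which the paper simply asserts to represent $\mathbb S^2\times\mathbb{RP}^2$ without carrying out the verifications you outline in (a)--(c). One small simplification: for the upper bound $\mathcal G\le 5$ you only need to exhibit \emph{one} cyclic permutation $\varepsilon$ with $\rho_\varepsilon(\Gamma)=5$, not check all twelve; the lower bound already guarantees no permutation can do better.
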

\vskip -0.8pc
\hfill $\Box$

\bigskip

\noindent {\bf Acknowledgements:}
 The authors express their gratitude to Prof. Basudeb Datta and Dr. Jonathan Spreer for helpful comments.
The first author is supported by CSIR, India for SPM Fellowship and the UGC Centre for Advanced Studies. The second author is supported by the ``National Group for Algebraic and Geometric Structures, and their Applications'' (GNSAGA - INDAM) and by M.I.U.R. of Italy (project ``Strutture Geometriche, Combinatoria e loro Applicazioni'').

\end{document}